\documentclass[a4paper]{article}
\usepackage{hyperref}
\usepackage[english]{babel}
\usepackage{indentfirst}
\usepackage{fancyhdr}
\usepackage{graphicx,subcaption}
\usepackage{amssymb}
\usepackage{amsmath}
\usepackage{cases}
\usepackage{mathtools}
\usepackage{amsthm}
\usepackage{color}
\usepackage{epstopdf}
\usepackage{thm-restate}

\newtheorem{theorem}{Theorem}

\newtheorem{lemma}[theorem]{Lemma}
\newtheorem{remark}{Remark}


\textwidth=17cm \textheight=22.25 cm
\topmargin=-1.0 cm \oddsidemargin=0cm \evensidemargin=0cm
\usepackage{tikz}

\usetikzlibrary{positioning}
\usetikzlibrary{arrows}
\usetikzlibrary{arrows.meta}
\usetikzlibrary{calc}
\usetikzlibrary{shapes}
\usepackage{verbatim}

\def\R0{\mathcal{R}_0}

\newcommand{\correz}[1]{\textcolor{black}{#1}}

\title{A geometric analysis of the SIRS compartmental model with fast information and misinformation spreading}
\author{Iulia Martina Bulai$^{1}$, Mattia Sensi$^{2}$, Sara Sottile$^{3}$\\[1em]
$^1${\footnotesize Dipartimento di Scienze Chimiche, Fisiche, Matematiche e Naturali, Universit\`a degli Studi di Sassari,}\\ {\footnotesize Via Vienna 2, 07100 Sassari, Italy}\\
$^2${\footnotesize Department of Mathematical Sciences ``G. L. Lagrange'', Politecnico di Torino,}\\ {\footnotesize Corso Duca degli Abruzzi 24, 10129 Torino Italy}\\
$^3${\footnotesize Dipartimento di Matematica, Universit\`a degli Studi di Trento,} \\{\footnotesize Via Sommarive 14, 38123 Povo (Trento), Italy}}
\date{\today}

\begin{document}

\maketitle

\begin{abstract}
\correz{We propose a novel slow-fast SIRS compartmental model with demography, by coupling a slow disease spreading model and a fast information and misinformation spreading model. Beside the classes of susceptible, infected and recovered individuals of a common SIRS model, here we define three new classes related to the information spreading model, e.g. unaware individuals, misinformed individuals and individuals who are skeptical to disease-related misinformation. Under our assumptions, the system evolves on two time scales. We completely characterize its asymptotic behaviour with techniques of Geometric Singular Perturbation Theory (GSPT). We exploit the time scale separation to analyse two lower dimensional subsystem separately. First, we focus on the analysis of the fast dynamics and we find three equilibrium point which are feasible and stable under specific conditions. We perform a theoretical bifurcation analysis of the fast system to understand the relations between these three equilibria when varying specific parameters of the fast system. Secondly, we focus on the evolution of the slow variables and we identify three branches of the critical manifold, which are described by the three equilibria of the fast system. We fully characterize the slow dynamics on each branch. Moreover, we show how the inclusion of (mis)information spread may negatively or positively affect the evolution of the epidemic, depending on whether the slow dynamics evolves on the second branch of the critical manifold, related to the skeptical-free equilibrium or on the third one, related to misinformed-free equilibrium, respectively. We conclude with numerical simulations which showcase our analytical results.}


\end{abstract}

\textbf{Keywords:} Mathematical epidemiology; Fast–slow system; Behavioural epidemiology of infectious diseases; Entry–exit function; Geometric singular perturbation theory

\section{Introduction}

In the era of social networks, when information travels fast between continents, it is of paramount importance to understand how the evolution of a disease can be affected by human behavioral dynamics influenced by information diffusion, \cite{DurMulSal,PhysRevE.102.042314,sontag2022misinformation}. 
For decades, from the early 20th century, the evolution of epidemics are modelled and studied via ordinary differential equations (ODEs) systems. The compartmental models are important tools for a better understanding of infectious diseases and they have been introduce in 1927 by Kermack and McKendrick \cite{KerMcK}, in fact they can be used to predict how the disease spread, or obtain information on the duration of an epidemic, the number of infected individuals, etc., but also to identify optimal strategies for control the disease.

In particular, mathematicians working on epidemic modelling have recently started focusing on the inclusion of awareness (with respect to the infectious disease under study) in the population \cite{juher2018tuning,juher2023saddle,just2018oscillations,manrubia2022individual,ye2021game}. Individuals aware of the disease might be more cautious towards risky behaviours or, \correz{on the other hand,} be skeptical of mainstream information and ignore precautions. The inclusion of distinct information based behaviours allows for more complex dynamics to be described, analysed and simulated, \cite{BulMonPed,MonProBul} and it is especially relevant in recent years, in which public opinion on COVID-19 and the corresponding containment measures reflected on the evolution of the pandemic, \cite{JohPel,ScaJacKat,GalValCas}.

In \cite{WANG20151}, a review concerning nonlinear coupling between disease dynamics and human behavioral dynamics, one of the results shows that emergent self-protective behavior can dampen an epidemic. \correz{What can one say} about the surviving of a virus in an epidemic due to human behavioral dynamics related to misinformation about the real evolution of the disease? \correz{To the best of our knowledge, in the literature (e.g. \cite{BulMonPed,MonProBul,buonomo2023oscillations}), the models that consider both human behavioral and disease dynamics, respectively, have maintained the simple structure of an ODEs system. They are models with all the variables characterized by one time scale, without considering the fact that the disease might spread slower than opinions and different time scales could render the model more realistic.}

In this paper, we focus on the interplay between fast information spreading and slow(er) disease spreading using techniques from Geometric Singular Perturbation Theory (GSPT).
Since the pioneering papers written by N. Fenichel~\cite{fenichel1979geometric}, GSPT has proven extremely suitable to describe systems evolving on multiple time scales, and analyse their transient and asymptotic behaviours. A thorough description of the fundamentals of GSPT, and hence of the techniques we use in this paper, can be found in \cite{hek2010geometric,jones1995geometric,kuehn2015multiple}; furthermore, for a more concise introduction, we refer the interested reader to the introductory sections of \cite{jardon2021geometric1}. 

The presence of widely different time scales is ubiquitous in natural phenomena \cite{bertram2017multi,hek2010geometric,jones1995geometric,kuehn2015multiple,wechselberger2020geometric}, and in particular in epidemic spreading. The latter has been the topic of various recent papers concerning the various facets of mathematical modelling of infectious diseases, particularly through the use of GSPT \cite{brauer2019singular,heesterbeek1993saturating,jardon2021geometric1,jardon2021geometric2,rocha2016understanding,wang2014dynamical}.

In such a setting, separation of time scales might occur in various ways.
Consider, for example, models which include both disease and demographic dynamics: typically, infectious periods have a much shorter duration than the average lifespan of the individuals in the population (weeks vs. years) \cite{andreasen1993effect,jardon2021geometric1,jardon2021geometric2}. A similar separation may occur between the infectiousness window of a communicable disease and the corresponding duration of natural or vaccine induced immunity \cite{chaves2007loss,woolthuis2017variation}.
Individual behaviour, including but not limited to (mis)information spreading and changes in the strategy towards an ongoing epidemic, may also evolve much faster than epidemics; several papers focus on this, both in continuous \cite{buonomo2023oscillations,castillo2016perspectives,dellamarca2023geometric,schecter2021geometric} and discrete time \cite{de2020discrete,bravo2021discrete}.

The main goal of this paper is to introduce and study \correz{a novel} SIRS compartmental model with demography and fast information and misinformation spreading in the population. Considering the speed at which information spreads in the age of social media, we let our system evolve on two time scales, a fast one, corresponding to the information ``layer'' and a slow one, corresponding to the epidemic ``layer''. We completely characterize the possible asymptotic behaviours of the system we propose with techniques of GSPT. In particular, we emphasise how the inclusion of (mis)information spreading can radically alter the asymptotic behaviour of the epidemic, depending on whether a non-negligible part of the population is misinformed or skeptical to misinformation. Indeed, from the numerical simulations we observe that the (mis)information spread can positively or negatively affect the evolution of the epidemic. In the first case, the output of the model can change from an endemic scenario to the extinction of the disease one, while in the second case, more interestingly, the misinformation spread can transform an infectious disease expected to die out into an endemic one.

The paper is structured as follows: in Section \ref{sec:model}, we propose the coupled information-infection spreading model as a singularly perturbed system of ODEs in standard form. In Section \ref{sec:fast}, we completely characterize the fast dynamics of our model, providing a comprehensive list of equilibria and the corresponding local and global stability conditions. In Section \ref{sec:slow}, we analyse the behaviour of the system on each branch of the critical manifold (i.e. equilibria of the fast system). In Section \ref{sec:entry-exit}, we briefly recall the definition of the entry-exit function, and we exploit it to deduce information on the possible delayed loss of stability in our system. In Section \ref{sec:numerics}, we perform an extensive numerical exploration of our model, illustrating our analytical results with carefully selected examples. We conclude in Section \ref{sec:concl}.

\section{The coupled info-epidemic model}\label{sec:model}

In this section, we introduce \correz{a novel} mathematical model by coupling two systems of ordinary differential equations, infodemic and epidemic systems, respectively. We assume that the aforementioned systems evolve on two widely different time scales, a fast one and a slow one, respectively. This assumption will shortly lead to a coupled fast-slow system. The Ordinary Differential Equations (ODEs) model describes the evolution of the considered compartments in time. We will denote the population in each compartment with a capital letter.

We first introduce a UMZ (Unaware-Misinformed-Skeptical) mathematical model, which describes the spread of (mis)information about an infectious disease through a compartmental SI$_1$I$_2$-like (Susceptible -- Infected with disease 1, representing misinformation -- Infected with disease 2, representing skepticism towards fake news) model.

The three classes of the infodemic model are 
\begin{itemize}
\item $U$: unaware individuals about the infectious disease related topics;
\item $M$: misinformed individuals, inclined to underestimate the efficiency of the measures put in place to reduce the number of contagions such as non-pharmaceutical measures (NPI), vaccines, etc., or even refuse these measures entirely;
\item $Z$: individuals skeptical to the disease related misinformation and inclined to take the proper measures in order to avoid the infection.
\end{itemize}
We assume that the unaware individuals, $U$, once they ``receive'' the misinformation can either agree and become misinformed, or become skeptical, therefore moving to the $M$ class at a rate $b_1$ or to the $Z$ class at a variable rate $\bar b_2$, which depends on the infected individuals $I$, introduced below, respectively. In this model, we also consider the  parameter \correz{$\mu_1$}, that is the loss of opinion (or interest in the ongoing epidemic) rate of the population. \correz{The total population is $N(t)=U(t)+M(t)+Z(t)$ for all $t \ge 0$}. 

The infodemic model (fast system \correz{in the fast time variable $t$}) reads 
\begin{subequations}\label{mod_1}
\begin{eqnarray}
\frac{\text{d}U}{\text{d} t}&=&  \correz{\mu_1 N}   -b_1\correz{\dfrac{UM}{N}}- \correz{\bar{b_2}(I) \dfrac{UZ}{N}} \correz{-\mu_1  U},\label{mod_1_U}\\ 
\frac{\text{d}M}{\text{d} t}&=& \correz{ b_1 \correz{\dfrac{UM}{N}} -\mu_1 M},\label{mod_1_M}   \\
\frac{\text{d}Z}{\text{d}t}&=& \correz{ \bar{b_2}(I)\correz{\dfrac{UZ}{N}} -\mu_1  Z} , \label{mod_1_Z}
\end{eqnarray}
\end{subequations}
with \correz{the first coupling term between the two subsystems} 
$$\bar{b_2}\correz{(I)} \coloneqq \dfrac{b_2}{1-KI}$$ and $K\in  [0,1)$. The positive feedback function $\bar{b_2}$ is an increasing function of $I$ with maximal rate $b_2$. This feedback function \correz{reproduces} the average perception of the disease severity, similarly to the strategy adopted in \cite{BulMonPed,MonProBul}, where the authors have used a Michaelis-Menten expression to mimic the societal response to the recent COVID-19 pandemic. \correz{Notice that the total population $N$ in system \eqref{mod_1} remains constant, as can be seen
by observing $N'(t) = 0$, and we can assume, without loss of generality, that $N(0) = 1$ so that $N(t) = 1$ for all $t\ge 0$.} 

Secondly, we introduce a modified version of the classical SIRS (Susceptible -- Infected -- Recovered -- Susceptible) model to describe the infection dynamics, and taking into account that the infection rate can change in time, e.g. increase when the number of misinformed individuals $M$ increase and decrease when the number of skeptical individuals $Z$ increase, respectively.
Here we denote by $S$ the class of susceptible individuals, with $I$ the class of infected individuals and with $R$ of recovered, respectively. We denote with $\varepsilon \ll 1$ a small parameter that \correz{describes the ratio of the two time scales, meaning that the slow dynamics (epidemic spreading, recovery etc.) happens at a much slower rate than the information spreading (fast system \eqref{mod_1})}. 
By introducing the slow time scale $\tau = \varepsilon t$, the epidemic model (slow system \correz{in the slow time variable $\tau$}) reads:
\begin{subequations}\label{mod_2}
\begin{eqnarray}
\frac{\text{d}S}{\text{d} \tau}&=&  \correz{\mu_2 N
   -\bar{\beta}(M,Z) \dfrac{S I}{N}}+\eta R- \correz{\mu_2} S,\label{mod_2_S}\\ 
\frac{\text{d}I}{\text{d} \tau}&=&  \correz{\bar{\beta}(M,Z) \dfrac{S I}{N}}- \gamma I-\correz{\mu_2}  I,\label{mod_2_I}   \\
\frac{\text{d}R}{\text{d} \tau}&=& \gamma I- \eta R-\correz{\mu_2} R \label{mod_2_R}, 
\end{eqnarray}
\end{subequations}
with \correz{the second coupling term between the two subsystems}
$$
    \correz{\bar{\beta}(M,Z)}\coloneqq \beta \dfrac{1+M}{1+Z}.
$$
\correz{The total population in system \eqref{mod_2} is $N(\tau) = S(\tau) + I(\tau) + R(\tau)$ and $N'(\tau) = 0$ for all $\tau \geq 0$. Assuming that the total population at time $\tau=0$ satisfies $N(0)=S(0)+I(0)+R(0)=1$, then $N(\tau)\equiv 1$ for all $\tau \geq 0$.}

All the parameters we describe in the following are given with respect to the slow time scale $\tau$; hence, all these parameters are $\mathcal{O}(\varepsilon)$ small compared to the ones we introduced for system \eqref{mod_1}. The first equation of \eqref{mod_2} describes the evolution in time of susceptible individuals. We assume to have a birth/immigration term \correz{$\mu_2$}; susceptible individuals can get infected at a variable rate $\correz{\Bar{\beta}(M,Z)}$, once recovered (third equation) they can lose their immunity at a rate $\eta$. In each compartment, they can die at a constant rate \correz{$\mu_2$}; this means that, for simplicity, we are ignoring disease induced mortality. 
The second equation of \eqref{mod_2} describes the evolution of the infected individuals, which recover at a rate $\gamma$.

\correz{Recall that $N(t)\equiv 1$ for all $t\geq 0$. Then,} the coupled model \eqref{mod_1}-\eqref{mod_2} in the fast time scale can be written as
\begin{subequations}\label{mod_3}
\begin{eqnarray}
 \frac{\text{d}U}{\text{d}t}&=&  \correz{\mu_1}  -b_1UM- \bar{b_2}\correz{(I)}UZ \correz{ -\correz{\mu_1} U}, \label{mod_3_U}\\
 \frac{\text{d}M}{\text{d}t}&=& \correz{ b_1 U M -\correz{\mu_1} M }, \label{mod_3_M}  \\
 \frac{\text{d}Z}{\text{d}t}&=& \correz{ \bar{b_2}(I) U Z -\correz{\mu_1} Z} , \label{mod_3_Z}\\
 \frac{\text{d}S}{\text{d}t}&=& \varepsilon \left(  \correz{\mu_2} - \correz{\bar{\beta}(M,Z)} S I+\eta R- \correz{\mu_2}S \right), \label{mod_3_S}\\
\frac{\text{d}I}{\text{d}t}&=& \varepsilon \left(  \correz{\bar{\beta}(M,Z)} S I- \gamma I-\correz{\mu_2} I \right),   \label{mod_3_I}\\
\frac{\text{d}R}{\text{d}t}&=& \varepsilon \left( \gamma I- \eta R-\correz{\mu_2} R \right) 
\label{mod_3_R} \end{eqnarray}
\end{subequations}
\correz{If $\varepsilon=0$ in system \eqref{mod_3}, populations $S$, $I$ and $R$ remain constant, and we recover the fast system \eqref{mod_1}. Equilibria of system \eqref{mod_1} form the so-called \emph{critical manifold} in the full 6 dimensional space.} Due to their biological interpretation, we assume that all the parameters are non-negative.

Moreover, it is easy to check that
$$
\frac{\text{d}X}{\text{d}t}|_{X=0}\geq 0,
$$
for $X=U,M,Z,S,I,R$, so that no population can ever become negative.

\begin{remark}
Notice that for $K = 0$, $\bar{b_2} \equiv b_2$, meaning that the evolution of the info-epidemic model does not depend on the number of infected individuals. However, the epidemic model depends on the number of misinformed and skeptical individuals, respectively, as they both have an impact on the infection rate. In this particular case, the misinformed individuals can change their mind depending only on the news received on the social media or other communication channels, or on their believes, and not based on the real-time evolution of the epidemic.
    \label{remark_K}
\end{remark}

In Figure \ref{fig:flow}, a flow diagram for the coupled system \eqref{mod_3} illustrates the mutual relationships among the two layers, i.e. (mis)information and disease spreading, considered in the population. The solid lines represent movement of individuals between compartments, while the dashed lines represent influences between information and epidemic spreading. 
Outgoing arrows denote losses due to deaths.

\correz{System \eqref{mod_3} could be reduced from 6 to 4 ODEs by exploiting twice the constant of motion $N$. However, doing so does not significantly simplify the analytical analysis. Moreover, not considering two of the variables of the model impairs a clear exposition of the results. Moreover, we are able to fully characterize the asymptotic behaviour of both systems \eqref{mod_1} and \eqref{mod_2}; hence, we remark that such a dimensionality reduction is viable but not necessary in this particular case.}

\begin{figure}[htbp!]
			\centering
		\begin{tikzpicture}
		\node[draw,circle,red,thick,minimum size=.75cm] (u) at (1,0) {$U$};
		\node[draw,circle,red,thick,minimum size=.75cm] (m) at (3,0) {$M$};
		\node[draw,circle,red,thick,minimum size=.75cm] (z) at (3,-2) {$Z$};
		\node[draw,circle,blue,thick,minimum size=.75cm] (s) at (5,0) {$S$};
		\node[draw,circle,blue,thick,minimum size=.75cm] (i) at (5,-2) {$I$};
        \node[draw,circle,blue,thick,minimum size=.75cm] (r) at (7,-2) {$R$};
		\draw[-{Latex[length=2.mm, width=1.5mm]},thick] (u)--(m) node[above, midway]{$b_1 UM$};

        \draw[-{Latex[length=2.mm, width=1.5mm]},thick] (u)--(z) node[below, midway]{$\bar{b_2} UZ\qquad$};
		\draw[-{Latex[length=2.mm, width=1.5mm]},thick] (i)--(r) node[above, midway]{$\gamma I$};

  \draw[-{Latex[length=2.mm, width=1.5mm]},thick] (s)--(i) node[right, midway]{$\bar{\beta}SI$};

    \draw[-{Latex[length=2.mm, width=1.5mm]},thick] (r)--(s) node[right, midway]{$\;\eta R$};
		\draw[{Latex[length=2.mm, width=1.5mm]}-,thick] (s)--++(0,1.5) node[left,midway]{$\correz{\mu_2}$};
		\draw[-{Latex[length=2.mm, width=1.5mm]},thick] (s)--++(1.5,0) node[above,midway]{$\correz{\mu_2} S$};
		\draw[-{Latex[length=2.mm, width=1.5mm]},thick] (i)--++(0,-1.5) node[right,midway]{$\correz{\mu_2} I$};
		\draw[-{Latex[length=2.mm, width=1.5mm]},thick] (r)--++(1.5,0) node[above,midway]{$\correz{\mu_2} R$};
		\draw[{Latex[length=2.mm, width=1.5mm]}-,thick] (u)--++(-1.5,0) node[above,midway]{$\correz{\mu_1}$};
        \draw[-{Latex[length=2.mm, width=1.5mm]},thick] (m)--++(0,1.5) node[left,midway]{$\correz{\mu_1} M$};
        \draw[-{Latex[length=2.mm, width=1.5mm]},thick] (u)--++(0,1.5) node[left,midway]{$\correz{\mu_1}U$};

    \draw[-{Latex[length=2.mm, width=1.5mm]},thick] (z)--++(0,-1.5) node[left,midway]{$\correz{\mu_1} Z$};

    \draw[dashed,-{Latex[length=2.mm, width=1.5mm]},thick] (m)--(s);

        \draw[dashed,-{Latex[length=2.mm, width=1.5mm]},thick] (m)--(i);

            \draw[dashed,-{Latex[length=2.mm, width=1.5mm]},thick] (z)--(s);

        \draw[dashed,{Latex[length=2.mm, width=1.5mm]}-{Latex[length=2.mm, width=1.5mm]},thick] (z)--(i);

        \draw[dashed,-{Latex[length=2.mm, width=1.5mm]},thick] (i)--(u);

		\end{tikzpicture}
			\caption{Flow diagram for system \eqref{mod_3}. Solid lines represent movement of individuals between compartments, dashed lines represent influences between information and epidemic spreading. Notice that the only double arrow connects $Z$ and $I$, which is the only mutual influence between the information and epidemic layers. Red: fast variables; blue: slow variables, where a multiplication by $\varepsilon$ is implicit on all arrows, for ease of notation.}
			\label{fig:flow}
		\end{figure}
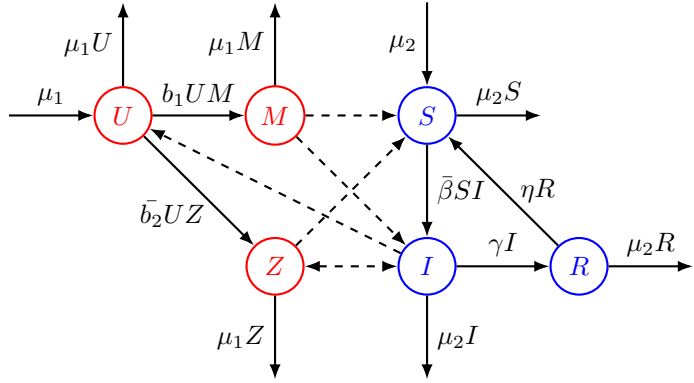

\section{Fast system: equilibria and stability}\label{sec:fast}

In this section, we focus on local and global stability of the equilibria of system \eqref{mod_3}, depending on the parameters involved. In particular, we remark that, when $\varepsilon=0$, the slow variables $S$, $I$ and $R$ do not evolve, and can (and will) be considered as parameters for the layer equations.

\subsection{Misinformed-Skeptical-Free Equilibrium}

We consider the fast system \eqref{mod_3} fixing $\varepsilon = 0$ and we compute the Basic Reproduction Number $\R0$ using the next generation matrix method, see \cite{Diekman1990}. The Misinformed-Skeptical-Free Equilibrium (MSFE) of \eqref{mod_3_U}--\eqref{mod_3_Z}, i.e. the equilibrium in which $M = Z = 0$, can be computed
as \begin{equation}\label{MSFE_eq}
    \tilde{E}_0 = (U_0, M_0, Z_0) = (1, 0, 0)
\end{equation} from easy calculations. We remark that both $M$ and $Z$ ``recruit'' individuals from the information-susceptible compartment $U$; hence, we consider both for the derivation of $\mathcal{R}_0^f$.

\begin{lemma}
The Basic Reproduction Number of system \eqref{mod_3_U}--\eqref{mod_3_Z} is $$\R0^{\text{f}}= \max\left\lbrace \dfrac{b_1}{\correz{\mu_1}},  \dfrac{b_2}{\correz{\mu_1}(1-K I)} \right\rbrace.$$
\end{lemma}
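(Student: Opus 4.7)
The plan is to apply the next generation matrix (NGM) method of van den Driessche and Watmough (as cited via \cite{Diekman1990}) to the fast subsystem \eqref{mod_3_U}--\eqref{mod_3_Z} with $\varepsilon=0$, treating $I$ as a parameter and viewing $M$ and $Z$ as the two ``infected-like'' compartments, since both recruit new members from the $U$ compartment exactly as infections recruit from susceptibles in the classical setting.

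The first step is to split the right-hand sides of \eqref{mod_3_M} and \eqref{mod_3_Z} into the new-infection vector $\mathcal{F}$ and the transition vector $\mathcal{V}$. Specifically, I would take
\[
\mathcal{F}(U,M,Z) = \begin{pmatrix} b_1 UM \\ \bar{b_2}(I)\, UZ \end{pmatrix}, \qquad \mathcal{V}(U,M,Z) = \begin{pmatrix} \mu_1 M \\ \mu_1 Z \end{pmatrix},
\]
which corresponds to the interpretation of $b_1 UM$ and $\bar{b_2}(I) UZ$ as the ``contagion'' flows from unaware to misinformed/skeptical, and $\mu_1 M$, $\mu_1 Z$ as the loss-of-opinion outflows.

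Next I would compute the Jacobian matrices $F=\partial \mathcal{F}/\partial (M,Z)$ and $V=\partial \mathcal{V}/\partial (M,Z)$ evaluated at the MSFE $\tilde{E}_0=(1,0,0)$ from \eqref{MSFE_eq}. Both turn out to be diagonal: $F=\operatorname{diag}(b_1,\bar{b_2}(I))$ and $V=\operatorname{diag}(\mu_1,\mu_1)$, so the next generation matrix is
\[
FV^{-1} = \operatorname{diag}\!\left(\frac{b_1}{\mu_1},\ \frac{\bar{b_2}(I)}{\mu_1}\right) = \operatorname{diag}\!\left(\frac{b_1}{\mu_1},\ \frac{b_2}{\mu_1(1-KI)}\right).
\]
The basic reproduction number $\mathcal{R}_0^{\text{f}}$ is then the spectral radius of $FV^{-1}$, which for a diagonal matrix with non-negative entries is simply the larger diagonal entry, yielding the claimed formula.

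The only subtle points, which I would briefly justify rather than ``grind'', are (i) that the decomposition $\mathcal{F},\mathcal{V}$ satisfies the sign/monotonicity hypotheses required for the NGM theorem (all entries of $F$ are non-negative, $V$ is a non-singular M-matrix since it is a positive multiple of the identity, and there is no inflow into $M$ or $Z$ other than through the $U$-mediated terms placed in $\mathcal{F}$); and (ii) that evaluating at the MSFE $U_0=1$ is legitimate, which follows from $N(t)\equiv 1$ and the explicit form \eqref{MSFE_eq}. These are the only real obstacles, and both are essentially bookkeeping; once they are checked, the diagonal structure of $FV^{-1}$ makes the spectral radius computation immediate.
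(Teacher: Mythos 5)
Your proposal is correct and follows essentially the same route as the paper: the same splitting $\mathcal{F}=(b_1UM,\ \bar{b_2}(I)UZ)$, $\mathcal{V}=(\mu_1 M,\ \mu_1 Z)$, evaluation of the Jacobians at $\tilde{E}_0=(1,0,0)$, and the spectral radius of the diagonal next generation matrix $FV^{-1}$. Your brief remarks on checking the NGM hypotheses are a welcome addition but do not change the argument.
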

\begin{proof}
The ``disease'' (information) compartments of the model are 
\begin{subequations}\label{info_eqs}
\begin{align}
\frac{\text{d} M}{\text{d} t}&= b_1 U M -\correz{\mu_1} M,  \\
\frac{\text{d}Z}{\text{d} t}&=\dfrac{b_2}{1-KI} UZ-\correz{\mu_1} Z ,
\end{align}
\end{subequations}
where $I \in [0,1]$ is a parameter indicating the initial fraction of infected individuals in the population. We define
\begin{align*}
     \mathcal{F}_1(U,M,Z) \coloneqq b_1 U M, \qquad  \mathcal{V}_1(U,M,Z)\coloneqq \correz{\mu_1} M, \\
     \mathcal{F}_2(U,M,Z) \coloneqq \dfrac{b_2}{1-KI} UZ, \qquad  \mathcal{V}_2(U,M,Z)\coloneqq \correz{\mu_1} Z.
\end{align*}
We then obtain
\begin{equation*}
   F= \left( \begin{matrix} \label{mat}
\dfrac{\partial \mathcal{F}_1}{\partial M}(\tilde{E}_0) & \dfrac{\partial \mathcal{F}_1}{\partial Z}(\tilde{E}_0) \\ \\
\dfrac{\partial \mathcal{F}_2}{\partial M}(\tilde{E}_0) & \dfrac{\partial \mathcal{F}_2}{\partial Z}(\tilde{E}_0)
\end{matrix} \right) = \left(\begin{matrix}
        b_1  & 0 \\
        0 & \dfrac{b_2}{1-K I} 
    \end{matrix}\right)
    \qquad \text{and} \qquad  V = \left( \begin{matrix} 
\dfrac{\partial \mathcal{V}_1}{\partial M}(\tilde{E}_0) & \dfrac{\partial \mathcal{V}_1}{\partial Z}(\tilde{E}_0) \\ \\
\dfrac{\partial \mathcal{V}_2}{\partial M}(\tilde{E}_0) & \dfrac{\partial \mathcal{V}_2}{\partial Z}(\tilde{E}_0)
\end{matrix} \right) = \left(\begin{matrix}
        \correz{\mu_1} & 0 \\
        0 & \correz{\mu_1}
    \end{matrix}\right).
    \end{equation*}
Therefore, the next generation matrix, defined as $W\coloneqq F V^{-1}$, is
\begin{equation*}
    W  = \left(\begin{matrix}
        b_1  & 0 \\
        0 &  \dfrac{b_2}{1-K I} 
    \end{matrix}\right)\left(\begin{matrix}
        \frac{1}{\correz{\mu_1}} & 0 \\ \\
        0 & \frac{1}{\correz{\mu_1}}
    \end{matrix}\right) \quad \Longrightarrow \quad \R0^{\text{f}}\coloneqq \rho(W) = \max\left\lbrace \dfrac{b_1}{\correz{\mu_1}},  \dfrac{b_2}{\correz{\mu_1}(1-K I)} \right\rbrace,
\end{equation*}
where $\rho(\cdot)$ denotes the spectral radius of a matrix.
\end{proof}

We now prove two results on the local and global stability of the MSFE \eqref{MSFE_eq}, respectively.

\begin{lemma}
[Local stability of the MSFE]\label{local_MSFE}
The MSFE is locally asymptotically stable for the subsystem \eqref{mod_3_U}--\eqref{mod_3_Z} if $\R0^{\text{f}}<1$, and unstable if $\R0^{\text{f}} >1$.
\end{lemma}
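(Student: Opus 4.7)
The plan is to linearise the fast subsystem \eqref{mod_3_U}--\eqref{mod_3_Z} (with the slow variable $I$ frozen as a parameter) about the MSFE $\tilde{E}_0 = (1,0,0)$ and read off the eigenvalues of the Jacobian directly. The key observation that makes this almost immediate is that every interaction term in the equations for $M$ and $Z$ carries a factor of $M$ or $Z$, so that at the equilibrium $M=Z=0$ the corresponding rows of the Jacobian pick up only the diagonal contribution. Concretely, I would compute
\[
J(\tilde{E}_0) = \begin{pmatrix} -\mu_1 & -b_1 & -\bar{b_2}(I) \\ 0 & b_1 - \mu_1 & 0 \\ 0 & 0 & \bar{b_2}(I) - \mu_1 \end{pmatrix},
\]
which is upper triangular, so its spectrum is exactly $\{-\mu_1,\; b_1-\mu_1,\; \bar{b_2}(I)-\mu_1\}$.

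Since $\mu_1>0$, the first eigenvalue is always negative, so local asymptotic stability of $\tilde{E}_0$ reduces to the two conditions $b_1<\mu_1$ and $\bar{b_2}(I)=b_2/(1-KI)<\mu_1$. Both inequalities hold simultaneously if and only if
\[
\max\!\left\{\frac{b_1}{\mu_1},\; \frac{b_2}{\mu_1(1-KI)}\right\} < 1,
\]
which is exactly $\R0^{\text{f}}<1$. Conversely, if $\R0^{\text{f}}>1$, then at least one of $b_1-\mu_1$ or $\bar{b_2}(I)-\mu_1$ is strictly positive, so $J(\tilde E_0)$ has an eigenvalue with positive real part and $\tilde E_0$ is unstable.

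There is essentially no obstacle beyond a direct Jacobian computation, because the triangular structure makes the diagonalisation trivial; the mild care needed is only to confirm that the two threshold conditions match the $\max$ in $\R0^{\text{f}}$. As an alternative, one could simply invoke the general result of \cite{Diekman1990} on the next generation matrix, which yields local asymptotic stability of the disease-free equilibrium precisely when $\rho(W)<1$; however, given the low dimension and the upper-triangular form, the explicit eigenvalue calculation is shorter and self-contained.
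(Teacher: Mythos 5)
Your proof is correct and follows essentially the same route as the paper: both compute the Jacobian of the fast subsystem at $\tilde{E}_0=(1,0,0)$, observe its (upper) triangular structure, read off the eigenvalues $-\mu_1$, $b_1-\mu_1$, $\bar{b_2}(I)-\mu_1$, and match the sign conditions to the $\max$ in $\R0^{\text{f}}$. No gaps.
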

\begin{proof}
    The Jacobian matrix of \eqref{mod_3_U}--\eqref{mod_3_Z} computed at the MSFE $\tilde{E}_0$ is
\begin{equation*}\label{eqn:Jac_matrix_E0}
   J_{|\tilde{E}_0}= \begin{pmatrix}
    -\correz{\mu_1} & - b_1 & -\dfrac{b_2}{1-KI}  & \\
    0 & b_1 - \correz{\mu_1} & 0 \\
    0 & 0 & \dfrac{b_2}{1-KI}-\correz{\mu_1}  \\
    \end{pmatrix}.
\end{equation*}
The eigenvalues of $J_{|\tilde{E}_0}$ can be easily computed as 
\begin{equation*}
    \lambda_1 = -\correz{\mu_1} , \qquad \lambda_2 = b_1-\correz{\mu_1} , \qquad \lambda_3 =  \dfrac{b_2}{1-KI}-\correz{\mu_1}.
\end{equation*}
Thus, if $\R0^{\text{f}}<1$ all the eigenvalues are negative and the MSFE is locally asymptotically stable. If, instead, $\R0^{\text{f}} >1$, one (or both) of $\lambda_2$ or $\lambda_3$ is positive, and the MSFE loses local stability.
\end{proof}

\begin{restatable}{proposition}{globalMSFE}
\label{global_MSFE}
   Assume $\R0^{\text{f}}<1$. Then, the MSFE is exponentially stable for the subsystem \eqref{mod_3_U}--\eqref{mod_3_Z}.
\end{restatable}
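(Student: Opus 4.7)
The plan is to strengthen Lemma \ref{local_MSFE} by showing that, on the biologically relevant invariant region $\{(U,M,Z):U,M,Z\geq 0,\;U+M+Z=1\}$, the convergence to $\tilde{E}_0$ is in fact exponential with an explicit rate. The key ingredient is the a-priori bound $U(t)\leq 1$, which comes from the conservation of the total population $N=U+M+Z\equiv 1$ together with non-negativity of all compartments, as already noted in Section \ref{sec:model}.

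First, I would apply this bound directly to the $M$-equation \eqref{mod_3_M}. Since $U(t)\leq 1$, one has
\[
\dot M=(b_1U-\mu_1)M\leq(b_1-\mu_1)M,
\]
so Gr\"onwall gives $M(t)\leq M(0)e^{(b_1-\mu_1)t}$. The assumption $\mathcal{R}_0^{\mathrm{f}}<1$ implies $b_1<\mu_1$, hence $M(t)\to 0$ exponentially. The very same argument applied to \eqref{mod_3_Z} yields
\[
\dot Z=\bigl(\bar{b_2}(I)U-\mu_1\bigr)Z\leq\bigl(\bar{b_2}(I)-\mu_1\bigr)Z,
\]
and $\mathcal{R}_0^{\mathrm{f}}<1$ also gives $\bar{b_2}(I)=b_2/(1-KI)<\mu_1$, so $Z(t)\leq Z(0)e^{(\bar{b_2}(I)-\mu_1)t}$ decays exponentially.

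Second, I would recover the convergence of the $U$-component for free from the conservation law: $|U(t)-1|=M(t)+Z(t)$, which inherits the exponential decay of the right-hand side. Combining the three bounds with the decay rate $\lambda:=\min\{\mu_1-b_1,\,\mu_1-\bar{b_2}(I)\}>0$, one obtains an estimate of the form
\[
\|(U(t),M(t),Z(t))-\tilde{E}_0\|\leq C\,\|(U(0),M(0),Z(0))-\tilde{E}_0\|\,e^{-\lambda t}
\]
for all initial conditions in the feasible simplex, which is the desired exponential stability statement.

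I do not expect a genuine obstacle here, since the triangular structure of the fast subsystem (the $M$- and $Z$-equations decouple from the $U$-equation once one has the upper bound $U\leq 1$) makes comparison arguments immediate; the only care needed is to exploit the invariance of the simplex to justify $U(t)\leq 1$ uniformly in time, rather than arguing via local linearization. If desired, the exact eigenvalues $\lambda_1,\lambda_2,\lambda_3$ computed in the proof of Lemma \ref{local_MSFE} can be quoted to identify the optimal decay rate as $\max\{\lambda_2,\lambda_3\}=-\lambda$, reinforcing that the exponential rate extracted from the comparison argument coincides with the spectral one.
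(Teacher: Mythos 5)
Your proposal is correct and follows essentially the same route as the paper's proof: both rest on the a priori bound $U\le 1$ and a comparison/Gr\"onwall argument giving exponential decay of the misinformed and skeptical compartments at rate $\mu_1(\mathcal{R}_0^{\text{f}}-1)$, the only cosmetic difference being that the paper bounds the sum $M+Z$ in one step while you treat the two equations separately. Your use of the conservation law $1-U=M+Z$ to conclude for the $U$-component is in fact slightly cleaner than the paper's closing remark, but it is the same argument in substance.
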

\begin{proof}
\correz{The proof of Proposition \ref{global_MSFE} can be found in Appendix \ref{app_gs}.}    
\end{proof}

\subsection{Skeptical-Free and Misinformed-Free Equilibria}
We now discuss the existence of other equilibria of system \eqref{mod_3_U}--\eqref{mod_3_Z}, i.e. the equilibria in which $M\neq 0$ and/or $Z\neq  0$.

\begin{theorem}\label{thm:equilibria}
    Assume $\R0^{\text{f}}>1$. We distinguish the following cases.
\begin{itemize}
    \item If $\dfrac{b_1}{\correz{\mu_1}} > 1$ and $\dfrac{b_2}{\correz{\mu_1} (1-KI)} < 1$ then system \eqref{mod_3_U}--\eqref{mod_3_Z} has a unique positive equilibrium \begin{equation}\label{eq:SFE}
        \tilde{E}_1 = \left( \dfrac{\correz{\mu_1}}{b_1}, \dfrac{b_1-\correz{\mu_1}}{b_1},0\right),
    \end{equation} called Skeptical-Free Equilibrium (SFE).
    \item If $\dfrac{b_1}{\correz{\mu_1}} < 1$ and $\dfrac{b_2}{\correz{\mu_1} (1-KI)} > 1$ then system \eqref{mod_3_U}--\eqref{mod_3_Z} has a unique positive equilibrium 
    \begin{equation}\label{eq:MFE}
     \tilde{E}_2 = \left( \dfrac{\correz{\mu_1}(1-KI)}{b_2}, 0,\dfrac{b_2-\correz{\mu_1}(1-KI)}{b_2}\right),   
    \end{equation}
    called Misinformed-Free Equilibrium (MFE).
    \item If $\dfrac{b_1}{\correz{\mu_1}} > 1$ and $\dfrac{b_2}{\correz{\mu_1} (1-KI)} > 1$ then \eqref{mod_3_U}--\eqref{mod_3_Z} admits two positive equilibria, namely the SFE and the MFE defined above.
\end{itemize}
\end{theorem}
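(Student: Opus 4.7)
The plan is to compute all equilibria of the layer system \eqref{mod_3_U}--\eqref{mod_3_Z} by direct algebraic elimination, treating $I \in [0,1)$ as fixed so that $\bar{b}_2(I) = b_2/(1-KI)$ is a positive constant. First I would observe that the $M$- and $Z$-equations factor as $M(b_1 U - \mu_1) = 0$ and $Z(\bar{b}_2(I) U - \mu_1) = 0$, so at any equilibrium each of $M$ and $Z$ either vanishes or forces, respectively, $U = \mu_1/b_1$ and $U = \mu_1(1-KI)/b_2$. This partitions the search into four branches according to the vanishing pattern of $(M,Z)$.

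Three of these branches are handled by substitution into the $U$-equation. The branch $M = Z = 0$ reduces that equation to $\mu_1(1-U) = 0$, recovering the MSFE already treated. On the branch $M \neq 0$, $Z = 0$, one has $U = \mu_1/b_1$, and the $U$-equation collapses to $\mu_1(1 - \mu_1/b_1 - M) = 0$, whence $M = (b_1-\mu_1)/b_1$; the strict feasibility $M > 0$ is then equivalent to $b_1/\mu_1 > 1$, producing the SFE \eqref{eq:SFE}. By the symmetric calculation, the branch $M = 0$, $Z \neq 0$ yields $U = \mu_1(1-KI)/b_2$ and $Z = (b_2 - \mu_1(1-KI))/b_2$, feasible precisely when $b_2/(\mu_1(1-KI)) > 1$, which is the MFE \eqref{eq:MFE}. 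The remaining branch $M, Z > 0$ would simultaneously require $U = \mu_1/b_1$ and $U = \mu_1(1-KI)/b_2$, i.e.\ the non-generic equality $b_1(1-KI) = b_2$, which is excluded by the strict inequalities in every case of the statement.

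To conclude I would simply match the three feasibility regimes of the strict inequalities $b_1/\mu_1 \gtrless 1$ and $b_2/(\mu_1(1-KI)) \gtrless 1$ (all compatible with the assumption $\R0^{\text{f}} > 1$) with the three cases of the theorem, reading off the SFE alone, the MFE alone, or both from the per-branch analysis. The argument is entirely elementary and I do not expect a genuine obstacle; the only subtlety worth making explicit is that the two threshold inequalities emerge as \emph{necessary and sufficient} positivity conditions for the newly non-zero coordinate on each branch, rather than as extra hypotheses, and that the knife-edge coincidence $b_1(1-KI) = b_2$ adds no further equilibrium inside any of the three open parameter regions considered.
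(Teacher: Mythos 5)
Your proposal is correct and follows essentially the same route as the paper: factoring the $M$- and $Z$-equations to obtain the alternatives $\{U=\mu_1/b_1 \text{ or } M=0\}$ and $\{U=\mu_1(1-KI)/b_2 \text{ or } Z=0\}$, substituting into the $U$-equation on each branch, and reading off feasibility from the positivity of the new nonzero coordinate. Your explicit remark that the coincidence $b_1(1-KI)=b_2$ is excluded by the strict inequalities is a nice touch of care; the paper defers that knife-edge case to its separate ``threshold case'' subsection rather than addressing it inside this proof.
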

\begin{proof}
In order to compute the equilibria of system \eqref{mod_3_U}--\eqref{mod_3_Z} we need to solve the following system of equations 
\begin{subequations}
  \begin{numcases}{}
    \correz{\mu_1} -\left(b_1 M +  \dfrac{b_2}{1-KI} Z + \correz{\mu_1}\right)  U = 0, \label{U_zero}\\ 
    (b_1 U - \correz{\mu_1} ) M = 0, \label{M_zero}\\ 
    \left( \dfrac{b_2}{1-KI} U - \correz{\mu_1} \right) Z = 0. \label{Z_zero}
\end{numcases}
\end{subequations} 

From \eqref{M_zero} and \eqref{Z_zero} we obtain
\begin{equation}\label{eq_slt}
 \left\lbrace   U = \dfrac{\correz{\mu_1}}{b_1} \quad \text{or}\quad  M =0\right\rbrace \qquad \text{and} \qquad \left\lbrace U = \dfrac{\correz{\mu_1}(1-KI)}{b_2} \quad \text{or}\quad  Z =0  \right\rbrace.
\end{equation}
Since $U$ can assume either of the two values, we distinguish between the following two cases.

\begin{itemize}
    \item $U =\dfrac{\correz{\mu_1}}{b_1}$ and $Z=0$. Substituting in \eqref{U_zero} we obtain
    \begin{equation*}
      \correz{\mu_1} - \left(b_1 M + \correz{\mu_1} \right) \dfrac{\correz{\mu_1}}{b_1} = 0 \quad \Longrightarrow \quad  M = \dfrac{b_1 - \correz{\mu_1}}{b_1},
     \end{equation*}
     which is feasible if and only if $b_1 - \correz{\mu_1} > 0$.
    \item $U = \dfrac{\correz{\mu_1}(1-KI)}{b_2} $ and $M = 0 $. Substituting in \eqref{U_zero} we obtain
    \begin{equation*}
        \correz{\mu_1} - \left( \dfrac{b_2}{\correz{\mu_1}(1-KI)} Z + \correz{\mu_1}\right) \dfrac{\correz{\mu_1}(1-KI)}{b_2} = 0  \quad \Longrightarrow \quad Z = \dfrac{b_2 - \correz{\mu_1} (1-KI)}{b_2},
    \end{equation*}
    which is feasible if and only if $b_2 - \correz{\mu_1}(1-KI) > 0$.
\end{itemize}

Thus we have obtained the following two equilibria:
\begin{align*}
    \tilde{E}_1 \coloneqq (U_1,M_1,Z_1) = \left( \dfrac{\correz{\mu_1}}{b_1}, \dfrac{b_1-\correz{\mu_1}}{b_1},0\right),& \quad \text{which exists if } b_1 > \correz{\mu_1},\\ 
    \tilde{E}_2 \coloneqq (U_2, M_2,Z_2) = \left(\dfrac{\correz{\mu_1}(1-KI)}{b_2}, 0, \dfrac{b_2 - \correz{\mu_1}(1-KI)}{b_2} \right),& \quad  \text{which exists if } b_2 > \correz{\mu_1}(1-KI).
\end{align*}
Recall the expression of the Basic Reproduction Number
\begin{equation*}
    \R0^\text{f} =\max\left\lbrace \dfrac{b_1}{\correz{\mu_1}},  \dfrac{b_2}{\correz{\mu_1}(1-K I)} \right\rbrace,
\end{equation*}
we obtain that if $\R0^\text{f} > 1$ either only one between $\tilde{E}_1$ and $\tilde{E}_2$ exists, or both of them exist.
\end{proof}

We now investigate the local stability of the positive equilibria, studying the eigenvalues of the Jacobian matrix of system \eqref{mod_3_U}-\eqref{mod_3_Z} evaluated at each point.

\begin{lemma}[Local stability of the SFE]\label{local_SFE}
   Assume that $\dfrac{b_1}{\correz{\mu_1}} >1$. The SFE \eqref{eq:SFE} is locally asymptotically stable for the subsystem \eqref{mod_3_U}-\eqref{mod_3_Z} if $b_1 >\dfrac{b_2}{1-KI}$, and unstable if $b_1 <\dfrac{b_2}{1-KI}$.
\end{lemma}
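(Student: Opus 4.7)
The plan is to compute the Jacobian of the subsystem \eqref{mod_3_U}--\eqref{mod_3_Z} at $\tilde{E}_1$, exploit the block structure that arises from $Z_1=0$, and then apply the Routh--Hurwitz criterion in dimension two to the remaining block.

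Concretely, I would first write out
\begin{equation*}
J = \begin{pmatrix}
-b_1 M - \bar b_2 Z - \mu_1 & -b_1 U & -\bar b_2 U \\
b_1 M & b_1 U - \mu_1 & 0 \\
\bar b_2 Z & 0 & \bar b_2 U - \mu_1
\end{pmatrix},
\end{equation*}
and evaluate it at $\tilde{E}_1=(\mu_1/b_1,(b_1-\mu_1)/b_1,0)$. Using $b_1 U_1 = \mu_1$ and $Z_1=0$, the entire third row collapses to $(0,0,\bar b_2 \mu_1/b_1 - \mu_1)$, so the Jacobian becomes block lower-triangular with respect to the splitting $(U,M)\oplus Z$. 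This immediately yields the eigenvalue
\begin{equation*}
\lambda_3 = \bar b_2\,\frac{\mu_1}{b_1} - \mu_1 = \frac{\mu_1}{b_1}\left(\frac{b_2}{1-KI} - b_1\right),
\end{equation*}
whose sign is precisely governed by the comparison between $b_1$ and $b_2/(1-KI)$ featured in the statement.

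It remains to show that the two eigenvalues coming from the upper-left $2\times 2$ block
\begin{equation*}
A = \begin{pmatrix} -b_1 & -\mu_1 \\ b_1-\mu_1 & 0 \end{pmatrix}
\end{equation*}
have negative real part unconditionally (given $b_1>\mu_1$). I would simply observe that $\operatorname{tr}(A) = -b_1 < 0$ and $\det(A) = \mu_1(b_1-\mu_1) > 0$ under the hypothesis $b_1/\mu_1>1$; the Routh--Hurwitz criterion in two dimensions then ensures both eigenvalues lie in the open left half-plane. Combining the two observations: if $b_1 > b_2/(1-KI)$ then all three eigenvalues have negative real part and the SFE is locally asymptotically stable, while if $b_1 < b_2/(1-KI)$ then $\lambda_3>0$ and the SFE is unstable, as claimed.

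I do not anticipate a genuine obstacle in this proof; the only care needed is to verify that the block-triangular structure holds exactly at the SFE (which depends on $Z_1=0$) and to keep track of the sign of $\bar b_2 - b_1$, since $\bar b_2$ depends on the frozen slow variable $I$ and must be treated as a nonnegative constant throughout this fast-layer stability analysis.
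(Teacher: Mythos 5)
Your proposal is correct and follows essentially the same route as the paper: both compute the Jacobian at $\tilde{E}_1$, use the vanishing of the third row's off-diagonal entries (a consequence of $Z_1=0$) to peel off the eigenvalue $\lambda_3=\mu_1\bigl(\tfrac{b_2}{b_1(1-KI)}-1\bigr)$, and then check that the remaining $2\times 2$ block is stable under $b_1>\mu_1$. The only cosmetic difference is that the paper factors that block's characteristic polynomial explicitly into the eigenvalues $-\mu_1$ and $\mu_1-b_1$, whereas you invoke the trace--determinant (Routh--Hurwitz) criterion; both are valid and equivalent here.
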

\begin{proof}
    The Jacobian matrix of \eqref{mod_3_U}--\eqref{mod_3_Z} computed at the SFE $\tilde{E}_1$ is
\begin{equation*}\label{eqn:Jac_matrix_E1}
   J_{|\tilde{E}_1}= \begin{pmatrix}
    -b_1& -\correz{\mu_1} &-\dfrac{\correz{\mu_1} b_2}{b_1(1-KI)}  \\
    b_1-\correz{\mu_1}  & 0 & 0 \\
    0 & 0 & \correz{\mu_1}\left(\dfrac{b_2}{b_1(1-KI)}-1\right)\\
    \end{pmatrix}
\end{equation*}
The eigenvalues of $J_{|\tilde{E}_1}$ can be easily computed as 
\begin{equation*}
    \lambda_1 = -\correz{\mu_1} , \qquad \lambda_2 = -b_1+\correz{\mu_1} , \qquad \lambda_3 =  \correz{\mu_1}\left(\dfrac{b_2}{b_1(1-KI)}-1\right).
\end{equation*}
Recall that $\dfrac{b_1}{\correz{\mu_1}} >1$, which implies $\lambda_2 <0$. Moreover, if $b_1 >\dfrac{b_2}{1-KI}$ all the eigenvalues are negative and thus the SFE is locally asymptotically stable. If, instead,  $b_1 <\dfrac{b_2}{1-KI}$ then $\lambda_3 >0 $ and the SFE loses local stability. 
\end{proof}

\begin{lemma}[Local stability of the MFE]\label{local_MFE}
       Assume that $\dfrac{b_2}{\correz{\mu_1}(1-KI)} >1$. The MFE \eqref{eq:MFE} is locally asymptotically stable for the subsystem \eqref{mod_3_U}-\eqref{mod_3_Z} if $b_1 <\dfrac{b_2}{1-KI}$, and unstable if $b_1 >\dfrac{b_2}{1-KI}$.
\end{lemma}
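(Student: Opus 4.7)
The plan is to mirror the strategy used in Lemma \ref{local_SFE}, namely to compute the Jacobian of the fast subsystem \eqref{mod_3_U}--\eqref{mod_3_Z} at the MFE $\tilde{E}_2$ and analyse its spectrum. First I would write out the Jacobian in general form
\[
J(U,M,Z)=\begin{pmatrix}
-b_1 M-\dfrac{b_2}{1-KI}Z-\mu_1 & -b_1 U & -\dfrac{b_2}{1-KI}U\\[4pt]
b_1 M & b_1 U-\mu_1 & 0\\[4pt]
\dfrac{b_2}{1-KI}Z & 0 & \dfrac{b_2}{1-KI}U-\mu_1
\end{pmatrix},
\]
and substitute the coordinates of $\tilde{E}_2$. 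Using $\dfrac{b_2}{1-KI}U_2=\mu_1$ and $M_2=0$ will simplify the matrix considerably: the $(3,3)$ entry vanishes, the $(2,2)$ entry reduces to $\mu_1\!\left(\dfrac{b_1(1-KI)}{b_2}-1\right)$, and the $(1,1)$ entry collapses to $-\dfrac{b_2}{1-KI}$ after using the explicit value of $Z_2$.

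Next I would exploit the block structure of $J_{|\tilde{E}_2}$. Because the second row contains a single non-zero entry, namely the diagonal one, the matrix is effectively block-triangular with a scalar block giving the eigenvalue
\[
\lambda_1=\mu_1\!\left(\dfrac{b_1(1-KI)}{b_2}-1\right),
\]
and a $2\times 2$ block in the $(U,Z)$ variables. The eigenvalue $\lambda_1$ is negative precisely when $b_1<\dfrac{b_2}{1-KI}$ and positive when the opposite strict inequality holds; this already delivers the instability half of the statement.

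For the remaining eigenvalues I would apply the Routh--Hurwitz criterion to the $(U,Z)$-block: its trace is $-\dfrac{b_2}{1-KI}<0$ and its determinant equals $\mu_1\cdot\dfrac{b_2-\mu_1(1-KI)}{1-KI}$, which is strictly positive under the standing assumption $\dfrac{b_2}{\mu_1(1-KI)}>1$ guaranteeing existence of the MFE. Hence both eigenvalues of the block have negative real part unconditionally in the regime where $\tilde{E}_2$ exists, so the sign of $\lambda_1$ alone dictates local stability, yielding the lemma.

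I do not anticipate a serious obstacle: the computation is essentially algebraic and parallels Lemma \ref{local_SFE}. The only mild subtlety is to make sure the simplifications at $\tilde{E}_2$ are carried out using the equilibrium relations (rather than expanding brute-force), so that the block structure becomes transparent and Routh--Hurwitz can be invoked on a $2\times 2$ matrix instead of on the full characteristic polynomial.
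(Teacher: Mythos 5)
Your proposal is correct and follows essentially the same route as the paper: both compute the Jacobian at $\tilde{E}_2$, use the equilibrium relations to simplify it, and read off that the decisive eigenvalue is $\mu_1\bigl(\tfrac{b_1(1-KI)}{b_2}-1\bigr)$ while the remaining spectrum is stable under the feasibility assumption. The only (cosmetic) difference is that the paper exhibits the two eigenvalues of the $(U,Z)$-block explicitly as $-\mu_1$ and $\tfrac{\mu_1(1-KI)-b_2}{1-KI}$, whereas you infer their negativity via trace and determinant; both are consistent and lead to the same conclusion.
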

\begin{proof}
    The Jacobian matrix of \eqref{mod_3_U}--\eqref{mod_3_Z} computed at the MFE $\tilde{E}_2$ is
\begin{equation*}\label{eqn:Jac_matrix_E2}
   J_{|\tilde{E}_2}= \begin{pmatrix}
  -  \dfrac{b_2}{1-KI}& - \dfrac{\correz{\mu_1} b_1(1-KI)}{b_2} &  -\correz{\mu_1} \\
    0 & \dfrac{b_1 \correz{\mu_1}(1-KI)}{b_2} -\correz{\mu_1}& 0 \\
    \dfrac{b_2-\correz{\mu_1}(1-KI)}{1-KI} & 0 & 0\\
    \end{pmatrix}.
\end{equation*}
The eigenavalues of $J_{|\tilde{E}_2}$ can be easily computed as 
\begin{equation*}
    \lambda_1 = -\correz{\mu_1} , \qquad \lambda_2 = \dfrac{\correz{\mu_1}(1-KI)-b_2}{1-KI} , \qquad \lambda_3 =  \correz{\mu_1}\left(\dfrac{b_1(1-KI)}{b_2} -1\right).
\end{equation*}

Recall that $\dfrac{b_2}{\correz{\mu_1}(1-KI)} >1$, which implies $\lambda_2 <0$. Moreover, if $b_1 <\dfrac{b_2}{1-KI}$ all the eigenvalues are negative and thus the SFE is locally asymptotically stable. If, instead,  $b_1 >\dfrac{b_2}{1-KI}$ then $\lambda_3 >0 $ and the MFE loses local stability. 
\end{proof}

\begin{remark}\label{remark_stab}
    Notice that if only $\tilde{E}_1$ lies in the biologically relevant region and $\tilde{E}_2$ does not, i.e. $\dfrac{b_1}{\correz{\mu_1}}>1$ and $\dfrac{b_2}{\correz{\mu_1} (1-KI)}<1$ then the condition for local asymptotically stability is always satisfied. Indeed, 
    \begin{equation*}
        b_1 > \correz{\mu_1} > \dfrac{b_2}{1-KI}.
    \end{equation*}
    Furthermore, we can conclude that if only $\tilde{E}_1$ exists it is always locally asymptotically stable. Similarly, when only $\tilde{E}_2$ exists, it is locally asymptotically stable.
\end{remark}

From Remark \ref{remark_stab}, we know that if only one of the two equilibria exists, then it is always locally asymptotically stable. We now
provide results on
the global stability of \eqref{eq:SFE} and \eqref{eq:MFE}, assuming the other equilibrium does not lie in the biologically relevant region. Although a result on global stability is stronger than the one on local stability, the exact formulation of the eigenvalues given in our local stability results will prove useful in the slow-fast analysis of the whole system \eqref{mod_3}. The proofs of Propositions \ref{global_SFE}-\ref{global_MFE2}, introduced below, are give in the Appendix \ref{app_gs}. We make extensive use of the classic Goh-Lotka-Volterra Lyapunov function \cite{cangiotti2023survey} in all cases.

\begin{restatable}[Global stability of the SFE]{proposition}{globalSFE}
\label{global_SFE}
   Assume that $\dfrac{b_1}{\correz{\mu_1}} >1$ and $\dfrac{b_2}{\correz{\mu_1} (1-KI)} < 1$. Then, the SFE \eqref{eq:SFE} is Globally Asymptotically Stable (GAS) for the subsystem \eqref{mod_3_U}-\eqref{mod_3_Z}.
\end{restatable}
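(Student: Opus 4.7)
Following the hint in the statement, I would use a Goh--Lotka--Volterra Lyapunov function adapted to $\tilde{E}_1$. Writing $(U_1,M_1,0)$ for the coordinates of $\tilde{E}_1$, define
$$V(U,M,Z)\coloneqq\Bigl(U-U_1-U_1\ln\tfrac{U}{U_1}\Bigr)+\Bigl(M-M_1-M_1\ln\tfrac{M}{M_1}\Bigr)+Z$$
on the forward-invariant region $\Omega=\{U,M>0,\,Z\ge 0,\,U+M+Z=1\}$; the positivity $M(t)>0$ for $t>0$ whenever $M(0)>0$ follows immediately from $\dot M=b_1 M(U-U_1)$, and analogously for $U$. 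Each of the first two summands is nonnegative and vanishes only at the corresponding equilibrium coordinate, so $V$ is positive definite at $\tilde{E}_1$ and its sublevel sets are compact inside $\Omega$.

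Next, I would exploit the equilibrium identities $b_1 U_1=\mu_1$ and $M_1=1-U_1$ (the latter a consequence of $\dot U|_{\tilde E_1}=0$ combined with $N\equiv 1$) to rewrite the fast subsystem as
$$\dot U=\mu_1(U_1-U)+b_1(U_1 M_1-UM)-\bar{b_2}UZ,\quad \dot M=b_1 M(U-U_1),\quad \dot Z=Z(\bar{b_2}U-\mu_1).$$
Differentiating along trajectories, $\dot V=(1-U_1/U)\dot U+(1-M_1/M)\dot M+\dot Z$. Using the split $U_1 M_1-UM=-M_1(U-U_1)-U(M-M_1)$, the mixed $(U-U_1)(M-M_1)$ contributions arising from $\dot U$ and $\dot M$ cancel exactly, and I expect to collapse the expression to
$$\dot V=-(\mu_1+b_1 M_1)\frac{(U-U_1)^2}{U}+Z\bigl(\bar{b_2}U_1-\mu_1\bigr).$$

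The only non-routine step, and where the hypotheses are used, is determining the sign of the $Z$-coefficient. The assumptions $b_1>\mu_1$ and $\bar{b_2}<\mu_1$ jointly yield $\bar{b_2}<b_1$, hence
$$\bar{b_2}U_1-\mu_1=\frac{\mu_1(\bar{b_2}-b_1)}{b_1}<0,$$
so $\dot V\le 0$ on $\Omega$, with equality precisely on $\{U=U_1,\,Z=0\}$. I would close the argument via LaSalle's invariance principle: on this set, $\dot U=0$ forces $b_1 U_1 M=\mu_1(1-U_1)$, i.e.\ $M=M_1$, so the largest invariant subset is $\{\tilde{E}_1\}$, proving global asymptotic convergence from any initial condition in $\Omega$. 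The main bookkeeping hurdle is the cross-term cancellation in the computation of $\dot V$; the only conceptual subtlety is recognizing that the two $\mathcal{R}_0^{\text{f}}$-type hypotheses conspire to give $\bar{b_2}<b_1$, which is exactly the inequality that forces the $Z$-coefficient to be negative.
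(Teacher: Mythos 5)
Your proof is correct, but it takes a different route from the paper's proof of this proposition. The paper argues in two stages: since $\bar{b_2}<\mu_1$ and $U\le 1$, the equation $\dot Z=(\bar{b_2}U-\mu_1)Z$ forces $Z\to 0$ exponentially; the proof then sets $Z=0$ and applies the two-dimensional Goh--Lotka--Volterra function $V(U,M)=U-U^*\ln U+M-M^*\ln M$, obtaining $\dot V=-\tfrac{b_1}{U}(U-U^*)^2$. You instead work directly in three dimensions with $V=\bigl(U-U_1-U_1\ln\tfrac{U}{U_1}\bigr)+\bigl(M-M_1-M_1\ln\tfrac{M}{M_1}\bigr)+Z$; your computation of $\dot V=-(\mu_1+b_1M_1)\tfrac{(U-U_1)^2}{U}+Z(\bar{b_2}U_1-\mu_1)$ checks out, the sign of the $Z$-coefficient follows from $\bar{b_2}<\mu_1<b_1$, and the LaSalle step correctly pins down $\{\tilde E_1\}$ as the largest invariant set. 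Interestingly, your argument is essentially the one the paper reserves for Proposition \ref{global_SFE2}, where both positive equilibria coexist and only $b_1>\tfrac{b_2}{1-KI}$ is assumed; since your hypotheses imply that inequality, your single computation covers both propositions at once, which is arguably cleaner. What the paper's two-stage version buys is a slightly stronger qualitative statement ($Z$ decays \emph{exponentially}, which is reused elsewhere) at the cost of an implicit appeal to the theory of asymptotically autonomous systems when passing from ``$Z\to 0$'' to ``analyse the system on $\{Z=0\}$''; your direct Lyapunov argument avoids that limiting step entirely. The only caveat, shared by both proofs, is that ``global'' must be read as global on $\{M(0)>0\}$, which your choice of $\Omega$ makes explicit.
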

\begin{proof}
\correz{The proof of Proposition \ref{global_SFE}  can be found in Appendix \ref{app_gs}.}    
\end{proof}

\begin{restatable}[Global stability of the MFE]{proposition}{globalMFE}
\label{global_MFE}
   Assume that $\dfrac{b_1}{\correz{\mu_1}} <1$ and $\dfrac{b_2}{\correz{\mu_1} (1-KI)} > 1$. Then, the MFE \eqref{eq:MFE} is GAS for the subsystem \eqref{mod_3_U}-\eqref{mod_3_Z}.
\end{restatable}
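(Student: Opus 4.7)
The plan is to mimic the structure used for the SFE in Proposition \ref{global_SFE}, but tailored to the fact that now the vanishing component at equilibrium is $M$ rather than $Z$. Concretely, I will use the Goh–Lotka–Volterra Lyapunov function
\[
V(U,M,Z) \;=\; \left(U - U_2 - U_2 \ln\frac{U}{U_2}\right) \;+\; M \;+\; \left(Z - Z_2 - Z_2 \ln\frac{Z}{Z_2}\right),
\]
which is nonnegative on the positive orthant and vanishes only at $\tilde E_2$. Note that the parameter $I$ is frozen in the layer problem, so $\bar b_2 = b_2/(1-KI)$ is a constant; the equilibrium relations I will exploit are $\bar b_2 U_2 = \mu_1$ and $\mu_1(1-U_2) = \bar b_2 U_2 Z_2$, together with $U_2 + Z_2 = 1$.

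First I would compute $\dot V$ along trajectories of \eqref{mod_3_U}--\eqref{mod_3_Z}. Substituting the equilibrium relations into $\dot U$ allows me to write
\[
\left(1-\tfrac{U_2}{U}\right)\dot U \;=\; -\mu_1\frac{(U-U_2)^2}{U} + \bar b_2\!\left(1-\tfrac{U_2}{U}\right)(U_2Z_2 - UZ) - b_1 M(U-U_2),
\]
while $\left(1-\tfrac{Z_2}{Z}\right)\dot Z = \bar b_2(U-U_2)(Z-Z_2)$ after using $\mu_1 = \bar b_2 U_2$. Adding these to $\dot M = (b_1 U - \mu_1)M$ and expanding, the cross terms collapse neatly: the $\bar b_2$ contributions combine into $-\bar b_2 Z_2 (U-U_2)^2/U$, and the $M$-containing terms simplify to $\mu_1\!\left(b_1(1-KI)/b_2 - 1\right)M$. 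The outcome is
\[
\dot V \;=\; -\bigl(\mu_1 + \bar b_2 Z_2\bigr)\frac{(U-U_2)^2}{U} \;+\; \mu_1\!\left(\frac{b_1(1-KI)}{b_2} - 1\right)M.
\]

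The coefficient in front of $M$ is the decisive point, and this is where the two hypotheses must cooperate: $b_1/\mu_1 < 1$ and $b_2/[\mu_1(1-KI)] > 1$ together yield $b_1(1-KI) < \mu_1(1-KI) < b_2$, so the bracket is strictly negative. Hence $\dot V \le 0$ on the biologically relevant region, with equality exactly when $U = U_2$ and $M=0$.

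Finally, I would invoke LaSalle's invariance principle. On the set $\{U = U_2,\ M = 0\}$, enforcing invariance through $\dot U = 0$ and using $\mu_1(1-U_2)=\bar b_2 U_2 Z_2$ gives $\bar b_2 U_2 (Z_2 - Z) = 0$, forcing $Z=Z_2$. Thus the largest invariant subset of the level set $\{\dot V = 0\}$ is $\{\tilde E_2\}$, and the MFE is globally asymptotically stable in the feasible region. The main obstacle I anticipate is purely bookkeeping: correctly expanding the cross terms in $\bar b_2(U-U_2)(Z-Z_2)$ against the $\left(1-U_2/U\right)$ factor so that everything cancels into a clean negative semidefinite quadratic form; once that simplification is carried out, the sign of the $M$-coefficient falls out directly from the two standing assumptions.
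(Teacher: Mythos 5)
Your proof is correct, and it takes a different route from the paper's. The paper proves this proposition in two stages: first it observes that $b_1/\mu_1<1$ together with $U\le 1$ gives $\dot M\le(b_1-\mu_1)M$, so $M\to 0$ exponentially; it then restricts to the invariant face $\{M=0\}$ and applies the two-dimensional Goh--Lotka--Volterra function $V(U,Z)=U-U^*\ln U+Z-Z^*\ln Z$, citing the computation already done for the SFE. You instead build a single three-dimensional Lyapunov function with a logarithmic term in $U$ and $Z$ and a \emph{linear} term in $M$, and your algebra checks out: the cross terms do collapse to $-\bar b_2 Z_2(U-U_2)^2/U$, the $M$-terms reduce to $(b_1U_2-\mu_1)M=\mu_1\bigl(b_1(1-KI)/b_2-1\bigr)M$, and both hypotheses are indeed needed to make that coefficient negative; LaSalle then finishes the job on the compact invariant simplex. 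This is essentially the technique the paper reserves for Propositions \ref{global_SFE2} and \ref{global_MFE2} (the bistable-region cases), transplanted to the present parameter regime, and it buys you a self-contained one-shot argument that avoids the slightly informal ``assume $M=0$'' reduction step (which strictly speaking needs an asymptotic-autonomy argument). What the paper's route buys in exchange is brevity and reuse of an existing computation. The only caveat, which applies equally to the paper's own proof, is that ``global'' must be read as global on the set where $Z(0)>0$, since the face $\{Z=0\}$ is invariant and orbits there converge to $\tilde E_0$ rather than to the MFE; your Lyapunov function (like the paper's) is only defined for $Z>0$, so this restriction is implicit but worth stating.
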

\begin{proof}
\correz{The proof of Proposition \ref{global_MFE}  can be found in Appendix \ref{app_gs}.}    
\end{proof}

\begin{remark}\label{remark_unstability}
    If both the positive equilibria exists, i.e. $b_1 >\correz{\mu_1}$ and $b_2> \correz{\mu_1}(1-KI)$, then Lemmas \ref{local_SFE} and \ref{local_MFE} ensure that the system can not exhibit bistability. Indeed, if $b_1 <\dfrac{b_2}{1-KI}$ then the SFE is unstable and if $b_1 >\dfrac{b_2}{1-KI}$ then the MFE is unstable. 
\end{remark}

In the following, we provide conditions for the global asymptotical stability of the positive equilibria also in the case in which both lie in the biologically relevant region, under the same hypothesis of Lemmas \ref{local_SFE} and \ref{local_MFE}.

\begin{restatable}[Global stability of the SFE]{proposition}{globalSFEb}
\label{global_SFE2}
   Assume that $\dfrac{b_1}{\correz{\mu_1}} >1$ and $\dfrac{b_2}{\correz{\mu_1} (1-KI)} > 1$. Then, the SFE \eqref{eq:SFE} is GAS for the subsystem \eqref{mod_3_U}-\eqref{mod_3_Z} if $b_1 >\dfrac{b_2}{1-KI}$.
\end{restatable}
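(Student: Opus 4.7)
Following the strategy outlined in the paragraph introducing Proposition \ref{global_SFE2}, I would construct a Goh-Lotka-Volterra type Lyapunov function, adapted to the fact that the $Z$-coordinate of the SFE vanishes. First, I would exploit the conservation law $U+M+Z\equiv 1$ (which follows from summing \eqref{mod_3_U}--\eqref{mod_3_Z}) to eliminate $U = 1 - M - Z$ and reduce to the planar system
\begin{equation*}
\dot M = M\bigl[b_1(1 - M - Z) - \mu_1\bigr],\qquad \dot Z = Z\bigl[\bar{b_2}(1 - M - Z) - \mu_1\bigr],
\end{equation*}
on the invariant triangle $\Delta = \{(M,Z):M\ge 0,\ Z\ge 0,\ M+Z\le 1\}$. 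Under the hypotheses of the proposition the SFE reduces to $(M_1, 0) = ((b_1-\mu_1)/b_1,0)$ and the equilibrium relation $b_1(1 - M_1) = \mu_1$ will be used repeatedly.

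Next, I would propose the candidate
\begin{equation*}
V(M, Z) = M - M_1 - M_1 \ln\!\bigl(M/M_1\bigr) + c\, Z,
\end{equation*}
with a positive constant $c$ to be fixed, noting that $V$ is non-negative on $\{M>0,\ Z\ge 0\}$ and vanishes only at the SFE. Differentiating along trajectories and using the identities $b_1(1-M-Z)-\mu_1 = -b_1\bigl[(M-M_1)+Z\bigr]$ and $\bar{b_2}(1-M-Z)-\mu_1 = -\bar{b_2}\bigl[(M-M_1)+Z\bigr]+\mu_1(\bar{b_2}-b_1)/b_1$ gives
\begin{equation*}
\dot V = -b_1(M-M_1)^2 - (b_1 + c\bar{b_2})(M-M_1)Z - c\bar{b_2}\,Z^2 + \frac{c\,\mu_1(\bar{b_2}-b_1)}{b_1}\,Z.
\end{equation*}
The key step is then to select $c$ so that the quadratic form in $(M-M_1, Z)$ becomes negative semidefinite; a discriminant computation shows this forces $c = b_1/\bar{b_2}$, which collapses the quadratic part into the perfect square $-b_1\bigl[(M-M_1)+Z\bigr]^2$.

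With this choice of $c$, the derivative simplifies to
\begin{equation*}
\dot V = -b_1\bigl[(M-M_1)+Z\bigr]^2 + \frac{\mu_1(\bar{b_2}-b_1)}{\bar{b_2}}\,Z,
\end{equation*}
where the standing hypothesis $b_1 > b_2/(1-KI) = \bar{b_2}$ guarantees that the linear coefficient is strictly negative. Hence $\dot V \le 0$ on $\Delta$, with equality forcing simultaneously $Z=0$ and $(M-M_1)+Z=0$, i.e.\ the singleton $\{(M_1,0)\}$. LaSalle's Invariance Principle then yields global attractivity on the interior of $\Delta$, which combined with the local stability already established in Lemma \ref{local_SFE} gives global asymptotic stability. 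The only real obstacle in this plan is guessing the sharp coefficient $c=b_1/\bar{b_2}$; the remaining algebra, as well as the reason why precisely the strict inequality $b_1 > \bar{b_2}$ (and not merely the feasibility conditions) is needed, then falls out transparently.
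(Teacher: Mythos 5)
Your proof is correct, and it reaches the conclusion by a route that differs in execution from the paper's. The paper works directly with the three-dimensional system and the Lyapunov function $V(U,M,Z)=U-U^*\ln U+M-M^*\ln M+Z$, i.e.\ with unit weight on $Z$; there the hypothesis $b_1>b_2/(1-KI)$ enters through the bound $U^*\bigl(b_1M+\tfrac{b_2}{1-KI}Z\bigr)\le b_1U^*(M+Z)=\mu_1(M+Z)$, which cancels the linear terms and leaves $\dot V\le-\mu_1(U-U^*)^2/(UU^*)$. You instead eliminate $U$ via $N\equiv1$, work in the planar $(M,Z)$ system, and put the tuned weight $c=b_1/\bar{b_2}$ on $Z$ so that the quadratic part collapses to the perfect square $-b_1[(M-M_1)+Z]^2$, with the hypothesis $b_1>\bar{b_2}$ entering only through the sign of the residual linear term in $Z$. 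Your version buys a slightly cleaner endgame: the set $\{\dot V=0\}$ is the single point $(M_1,0)$, so you get a strict Lyapunov function and do not really need LaSalle, whereas the paper's $\dot V$ vanishes on the whole set $\{U=U^*\}$ after its inequality step (and, incidentally, your computation is tighter than the paper's, whose final algebraic simplification contains typographical slips). The paper's version avoids committing to the reduction $U=1-M-Z$ and keeps the same template as Propositions \ref{global_SFE} and \ref{global_MFE2}. Two small points of hygiene, shared with the paper's own argument: summing \eqref{mod_3_U}--\eqref{mod_3_Z} gives $N'=\mu_1(1-N)$ rather than $N'=0$, so the identity $U=1-M-Z$ is the standing normalization $N(0)=1$ rather than a consequence of the equations; and ``global'' must be read as global on $\{M>0\}$, since the plane $M=0$ is invariant and orbits there cannot reach the SFE — your $\ln M$ term (like the paper's) silently encodes this restriction.
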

\begin{proof}
\correz{The proof of Proposition \ref{global_SFE2} can be found in Appendix \ref{app_gs}.}    
\end{proof}

\begin{restatable}[Global stability of the MFE]{proposition}{globalMFEb}
\label{global_MFE2}
   Assume that $\dfrac{b_1}{\correz{\mu_1}} >1$ and $\dfrac{b_2}{\correz{\mu_1} (1-KI)} > 1$. Then, the MFE \eqref{eq:MFE} is GAS for the subsystem \eqref{mod_3_U}-\eqref{mod_3_Z} if $b_1 <\dfrac{b_2}{1-KI}$.
\end{restatable}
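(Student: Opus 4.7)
The plan is to adapt the Goh--Lotka--Volterra Lyapunov technique used for Proposition \ref{global_SFE2}, swapping the roles of $M$ and $Z$. Concretely, set $\beta := b_2/(1-KI)$ and consider
$$V(U,M,Z) = \bigl(U - U_2 - U_2 \ln(U/U_2)\bigr) + M + \bigl(Z - Z_2 - Z_2 \ln(Z/Z_2)\bigr),$$
which is non-negative on the biologically relevant region and vanishes exactly at $\tilde{E}_2$; the linear (rather than logarithmic) term in $M$ reflects $M_2 = 0$.

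First, I would rewrite the vector field in a form suited to $\tilde{E}_2$. Using the equilibrium identity $\mu_1 = \beta U_2$ and the conservation law $U + M + Z = 1$, one checks that
$$U' = -M(b_1 U - \mu_1) - \beta Z(U - U_2), \qquad M' = M(b_1 U - \mu_1), \qquad Z' = \beta Z(U - U_2).$$
Expanding $\dot V = \tfrac{U-U_2}{U} U' + M' + \tfrac{Z-Z_2}{Z} Z'$ and grouping terms, the contribution containing $Z$ and $U'$ produces the manifestly non-positive piece $-\beta Z(U-U_2)^2/U$ together with a cross-term $\beta(Z-Z_2)(U-U_2)$.

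Next, I would use $Z - Z_2 = -(U - U_2) - M$ (which follows from $U + M + Z = U_2 + Z_2 = 1$) to split the cross-term as $-\beta(U-U_2)^2 - \beta M(U-U_2)$. The remaining $M$-dependent terms collect, after routine algebra, into
$$M U_2 \left[(\beta + b_1) - \beta \frac{U}{U_2} - \beta \frac{U_2}{U}\right].$$
The key estimate is the AM--GM bound $\beta U/U_2 + \beta U_2/U \geq 2\beta$, with equality iff $U = U_2$; hence the bracket is at most $b_1 - \beta$, which is strictly negative by the hypothesis $b_1 < b_2/(1-KI) = \beta$. Combining, one obtains
$$\dot V \leq -\frac{\beta Z(U-U_2)^2}{U} - \beta (U-U_2)^2 + M U_2 (b_1 - \beta) \leq 0,$$
with equality only at $(U,M,Z) = (U_2,0,Z_2) = \tilde{E}_2$. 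LaSalle's invariance principle then yields global asymptotic stability.

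The main obstacle I anticipate is the algebraic recombination: one needs to exploit $U + M + Z = 1$ twice (first to rewrite $U'$ in a compact form tied to $\tilde{E}_2$, then to convert the $(Z-Z_2)$ cross-term into an $(U-U_2)^2$ piece plus an extra $M(U-U_2)$) so that the leftover $M$-coefficient has exactly the AM--GM structure above. This is the step where the strict inequality $b_1 < b_2/(1-KI)$ enters and produces the required sign.
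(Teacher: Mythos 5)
Your proof is correct and uses exactly the Lyapunov function the paper employs for this proposition, $V = U - U_2\ln U + M + Z - Z_2\ln Z$ (up to additive constants), so the approach is essentially the same. The only difference is in the intermediate algebra: the paper (via the mirrored computation of Proposition \ref{global_SFE2}) bounds $b_1 M + \tfrac{b_2}{1-KI}Z$ by $\tfrac{b_2}{1-KI}(M+Z)$ directly and lets the $(M+Z)$ terms cancel, whereas you rewrite the field in deviation form, invoke the conservation law, and finish with an AM--GM estimate; both yield a non-positive $\dot V$ whose zero set reduces to $\tilde{E}_2$ (yours slightly more explicitly), after which LaSalle applies.
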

\begin{proof}
\correz{The proof of Proposition \ref{global_MFE2} can be found in Appendix \ref{app_gs}.}    
\end{proof}

\begin{remark}\label{remark_meaning_stab}
 Considering the Skeptical-Free Equilibrium, $\tilde{E}_1$, we observe that the eigenvalue $\lambda_3$ vanishes when
  $$ b_1 ^{\dag} = \dfrac{b_2}{1-KI} $$ 
  e.g. when the rate of becoming misinformed is equal to the rate of becoming skeptical. Moreover, the stability of either the Skeptical-Free Equilibrium (resp. Misinformed-Free Equilibrium) corresponds with the rate of the flow from $U$ to $M$ being smaller (resp. greater) than the rate of the flow from $U$ to $Z$, which is quite an intuitive result.  In the following Section we analyze the behaviour of the model under this condition.
\end{remark}

\subsection{Threshold case}\label{sec:threshold}

Note that in \eqref{eq_slt} we did not consider the case in which \textit{both} $M\neq 0$ and $Z\neq 0$. Indeed, in this case we need 
$$\dfrac{\correz{\mu_1}}{b_1} = \dfrac{\correz{\mu_1}(1-KI)}{b_2} \quad \Longleftrightarrow \quad b_1 = \dfrac{b_2}{1-KI},$$
which could be possible for a specific value of the slow variable $I$, namely
$$
I=\dfrac{b_1-b_2}{K b_1}.
$$

Notice that in this case the condition  $b_1 = \dfrac{b_2}{1-KI}$ is exactly the ``critical'' condition of Remark \ref{remark_meaning_stab}. 

Defining $b \coloneqq   b_1 = \dfrac{b_2}{1-KI}$, equations \eqref{mod_3_U}-\eqref{mod_3_Z} become 
\begin{subequations}\label{sum_model}
\begin{eqnarray}
\frac{\textup{d}U}{\textup{d} t}&=& \correz{\mu_1}  -b U (M+Z)-\correz{\mu_1} U  ,\\ 
\frac{\textup{d}M}{\textup{d}t}&=& b U M-\correz{\mu_1} M,\\
\frac{\textup{d}Z}{\textup{d}t}&=& b UZ-\correz{\mu_1} Z.
\end{eqnarray}
\end{subequations}
The value of the Basic Reproduction Number is simply $\R0^{\text{f}} = \dfrac{b}{\correz{\mu_1}}$ and the MSFE $\tilde{E}_0= (1,0,0)$ always exists and it is GAS if $\R0^{\text{f}}<1$. We remark that, from system \eqref{sum_model}, we can not find the explicit value of the positive equilibria; indeed, computing its expression \correz{(and recalling that, under our simplifying assumption $N=1$, $M+Z = 1-U$)} we obtain 
\begin{equation}\label{eq:caso_lim}
    U^* = \dfrac{1}{\R0^{\text{f}}} \quad \text{and} \quad M^*  = 1-Z^*- U^*.
\end{equation}
We thus have a line of equilibria for the fast system \eqref{sum_model}, $\tilde{E}_3 = (U^*,M^*,Z^*)$, which exists if $\R0^{\text{f}}>1$.  On this line, there exists only one equilibrium for the perturbed system \eqref{mod_3}, as we show in the Appendix \ref{app_eq}. The eigenvalues on this one dimensional set of equilibria for the fast flow are
$$
\lambda_1=0, \quad \lambda_{2,3}=\dfrac{-b-\correz{\mu_1} \pm \sqrt{(b-\correz{\mu_1})^2+4\correz{\mu_1}^2}}{2}.
$$
The zero eigenvalues $\lambda_1$ corresponds to the direction spanned by this line of equilibria. Notice that $\lambda_3$ ($-$ sign in front of the square root) it is always negative, while $\lambda_2$ ($+$ sign) is negative if $\R0^{\text{f}}>1$ (which is a condition needed for the feasibility of this equilibrium), and positive otherwise.

It is convenient to introduce an auxiliary variable, taking into account the additional symmetry this scenario introduces in the model, system
\eqref{sum_model} can be rewritten, introducing for ease of notation $L \coloneqq M+Z$, obtaining
\begin{subequations}\label{pat_sys}
\begin{eqnarray}
\frac{\textup{d}U}{\textup{d} t}&=& \correz{\mu_1} \correz{L}  -b U L  ,\\ 
\frac{\textup{d}L}{\textup{d}t}&=& b U L -\correz{\mu_1} L.
\end{eqnarray}
\end{subequations}
The unique positive equilibrium of system \eqref{pat_sys} is given by $\tilde{E}= \left(\dfrac{1}{\R0^{\text{f}}},1-\dfrac{1}{\R0^{\text{f}}}\right)$ and it exists in the biologically feasible region when $\R0^{\text{f}}>1$.

\begin{restatable}[Global stability of the positive equilibrium]{proposition}{globalEEpat}
\label{globalEEpat}
   Assume that $\R0^{\text{f}}>1$. Then, the positive equilibrium $\tilde{E}= \left(\dfrac{1}{\R0^{\text{f}}},1-\dfrac{1}{\R0^{\text{f}}}\right)$ is globally asymptotically stable for the subsystem \eqref{pat_sys}.
\end{restatable}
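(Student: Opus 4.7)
The plan is to mirror the strategy used for Propositions \ref{global_SFE}--\ref{global_MFE2}, constructing a Goh-Lotka-Volterra Lyapunov function and exploiting a structural feature special to \eqref{pat_sys}: summing the two equations gives $\dot U + \dot L = 0$, so $U+L$ is a conserved quantity. Under the biological normalization $N=1$ used throughout the paper, the dynamics is confined to the invariant segment $\Delta := \{(U,L) : U,L\ge 0,\ U+L=1\}$, and the positive equilibrium $\tilde E = (U^*,L^*)$ with $U^*=1/\R0^{\text{f}}$, $L^*=1-1/\R0^{\text{f}}$ lies in $\mathrm{int}(\Delta)$ precisely when $\R0^{\text{f}}>1$.

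Next, I would introduce the candidate
\[
V(U,L) \;=\; \left(U - U^* - U^*\ln\tfrac{U}{U^*}\right) + \left(L - L^* - L^*\ln\tfrac{L}{L^*}\right),
\]
which is non-negative on $\mathrm{int}(\Delta)$ and vanishes only at $\tilde E$. Differentiating along \eqref{pat_sys}, substituting $\mu_1 = bU^*$, and grouping terms yields the intermediate expression
\[
\dot V \;=\; bL\,(U-U^*)\!\left(\frac{U^*}{U}-\frac{L^*}{L}\right).
\]
The decisive algebraic simplification comes from the conservation law: on $\Delta$ we have $U^*L - L^*U = U^*(1-U)-(1-U^*)U = U^*-U$, which collapses the expression above to
\[
\dot V \;=\; -\,\frac{b\,(U-U^*)^2}{U} \;\le\; 0,
\]
with equality iff $U=U^*$.

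I would then conclude via LaSalle's invariance principle: the largest invariant subset of $\{U=U^*\}$ for \eqref{pat_sys} intersected with $\Delta$ is the single point $\tilde E$, because the constraint $U=U^*$ on $\Delta$ forces $L=L^*$ through the conservation law. Hence every trajectory starting in $\mathrm{int}(\Delta)$ converges to $\tilde E$, establishing the claimed global asymptotic stability.

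The main subtlety I anticipate is that, for the unconstrained two-dimensional flow of \eqref{pat_sys}, the entire line $\{U=U^*\}$ consists of equilibria (both right-hand sides vanish there identically), so the conservation law is not a mere convenience but the essential ingredient selecting a unique asymptotic limit from an a priori continuum of candidates. The algebraic identity $U^*L-L^*U = U^*-U$, valid on $\Delta$, is the computational payoff of this observation, and is what makes the Goh-type argument close cleanly, in direct analogy with the proofs of Propositions \ref{global_SFE2}--\ref{global_MFE2}.
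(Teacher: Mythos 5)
Your proof is correct, and it reaches the same conclusion by a Lyapunov argument as the paper does, but with a genuinely different Lyapunov function. The paper uses the asymmetric candidate $V(U,L)=\tfrac{1}{2}(U-U^*)^2+L-L^*\ln L$ and closes the estimate through a chain of substitutions ($\mu_1=(bL^*+\mu_1)U^*$, $bU^*=\mu_1$) and one inequality (discarding $-\mu_1(U-U^*)^2$), arriving at $\dot V\le -bL(U-U^*)^2\le 0$. You instead take the fully symmetric Goh--Lotka--Volterra function in both variables and make the conservation law $U+L=1$ do the algebraic work via the identity $U^*L-L^*U=U^*-U$ on $\Delta$, which collapses $\dot V$ to an exact equality $-b(U-U^*)^2/U$ with no discarded terms. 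Your route is arguably cleaner and more uniform with Propositions \ref{global_SFE}--\ref{global_MFE2}, and your explicit LaSalle step is a point of extra care the paper glosses over: both your zero set $\{U=U^*\}$ and the paper's $\{U=U^*\}\cup\{L=0\}$ require the restriction to the invariant simplex (or the properness of $V$ near $L=0$) to single out $\tilde E$, and you address this while the paper stops at $\dot V\le 0$. The only cosmetic caveat is that your observation that the whole line $\{U=U^*\}$ is a set of equilibria applies to the form \eqref{pat_sys} (where the $U$-equation reads $\mu_1 L-bUL$); for the equivalent pre-substitution form $\mu_1-bUL-\mu_1 U$ this is no longer literally true off $\Delta$, but since the statement concerns \eqref{pat_sys} your reading is the right one.
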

\correz{
\begin{proof}
The proof of Proposition \ref{globalEEpat} can be found in Appendix \ref{app_gs}.
\end{proof}
}
\begin{remark}
    The set of equilibria \eqref{eq:caso_lim} is not a branch of the critical manifold, since its existence depends on $I$, one for the slow variables, assuming a specific, fixed value. Hence, the slow flow can not evolve on this set.
\end{remark}

\subsection{Bifurcation analysis}

The fast system \eqref{mod_1} admits four equilibrium points. We summarize the respective feasibility and stability conditions in Table \eqref{tab:tabella}. Next, we prove the existence of three transcritical bifurcations between $\tilde{E}_0$ and $\tilde{E}_1$, $\tilde{E}_2$ or $\tilde{E}_3$, respectively.
\begin{table}[ht]
\centering
{\renewcommand{\arraystretch}{2.8}
\begin{tabular}{|l|l|l|}
\hline \textbf{Equilibria} & \textbf{Feasibility} &\textbf{Stability} \\
\hline $\tilde{E}_0 = (1,0,0)$ & Always feasible & $\R0^{\text{f}}= \max\left\lbrace \dfrac{b_1}{\correz{\mu_1}},  \dfrac{b_2}{\correz{\mu_1}(1-K I)} \right\rbrace<1$ \\
\hline $\tilde{E}_1 = \left( \dfrac{\correz{\mu_1}}{b_1}, \dfrac{b_1-\correz{\mu_1}}{b_1},0\right)$ & $\dfrac{b_1}{\correz{\mu_1}} > 1$  & $b_1 >\dfrac{b_2}{1-KI}$ \\
\hline
$\tilde{E}_2  = \left(\dfrac{\correz{\mu_1}(1-KI)}{b_2}, 0, \dfrac{b_2 - \correz{\mu_1}(1-KI)}{b_2} \right)$  & $\dfrac{b_2}{\correz{\mu_1} (1-KI)}> 1$    & $b_1 <\dfrac{b_2}{1-KI}$  \\
\hline $\tilde{E}_3=(U^*,M^*, Z^*)$  & $ b:=b_1 = \dfrac{b_2}{1-KI}$ and $\R0^{\text{f}}>1$ & Always when feasible  \\ \hline
\end{tabular}}
\caption{Summary of the equilibria of the fast subsystem, \eqref{mod_1}, their feasibility and stability conditions.\label{tab:tabella}}
\end{table}

\begin{restatable}{proposition}{TB_E0_E1}
\label{TB_E0_E1}
  System \eqref{mod_1} exhibits a transcritical bifurcation at $b_1^*:=b_1 = \correz{\mu_1}$ between $\tilde{E}_0$ and $\tilde{E}_1$.
\end{restatable}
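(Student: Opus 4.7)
The plan is to prove the existence of a transcritical bifurcation by verifying the four Sotomayor conditions (see e.g. Perko's \emph{Differential Equations and Dynamical Systems}). First, I would observe that $\tilde{E}_1$ evaluated at $b_1 = \mu_1$ collapses onto $\tilde{E}_0 = (1,0,0)$, so the two branches of equilibria indeed meet at $b_1^* = \mu_1$. From Lemma \ref{local_MSFE}, the Jacobian $J_{|\tilde{E}_0}$ at $b_1 = b_1^*$ has eigenvalues $\lambda_1 = -\mu_1$, $\lambda_2 = 0$, $\lambda_3 = b_2/(1-KI) - \mu_1$, so $\lambda_2$ is the required simple zero eigenvalue (provided $b_2 \neq \mu_1(1-KI)$, which is the natural non-degeneracy assumption separating this bifurcation from the threshold scenario of Section \ref{sec:threshold}).

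Next I would compute a right null vector $v$ and a left null vector $w$ of $J_{|\tilde{E}_0}$ at $b_1^*$. A direct computation from the form of $J_{|\tilde{E}_0}$ yields
\begin{equation*}
    v = (-1,\,1,\,0)^{T}, \qquad w = (0,\,1,\,0)^{T}.
\end{equation*}
Denoting the right-hand side of system \eqref{mod_1} by $f(U,M,Z;b_1)$, I would then verify the three transversality conditions of Sotomayor's theorem. For condition (T1), I compute $f_{b_1}(\tilde{E}_0, b_1^*) = (-UM, UM, 0)^{T}|_{\tilde{E}_0} = (0,0,0)^{T}$, so $w^{T} f_{b_1} = 0$, as required for a transcritical (not saddle-node) bifurcation.

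For condition (T2), a short calculation gives
\begin{equation*}
    Df_{b_1}(\tilde{E}_0, b_1^*)\,v = \begin{pmatrix} 0 & -1 & 0 \\ 0 & 1 & 0 \\ 0 & 0 & 0 \end{pmatrix}\begin{pmatrix} -1 \\ 1 \\ 0 \end{pmatrix} = \begin{pmatrix} -1 \\ 1 \\ 0 \end{pmatrix},
\end{equation*}
so $w^{T}\bigl[Df_{b_1}(\tilde{E}_0, b_1^*)\,v\bigr] = 1 \neq 0$. For condition (T3), the only relevant nonzero second partial of $f_2 = b_1 UM - \mu_1 M$ at $\tilde{E}_0$ is $\partial^2 f_2/\partial U\partial M = b_1 = \mu_1$, giving $w^{T}\bigl[D^2 f(\tilde{E}_0, b_1^*)(v,v)\bigr] = 2\mu_1 v_1 v_2 = -2\mu_1 \neq 0$. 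The three conditions together identify the bifurcation as transcritical.

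I do not expect any serious obstacle: since the direction of the bifurcation lies in the $(U,M)$-plane (the $Z$-component of both $v$ and $w$ vanishes), the calculation is effectively two-dimensional, and the algebra is essentially the same as the prototypical planar transcritical normal form. The only subtlety worth flagging explicitly in the write-up is the non-degeneracy hypothesis $b_2/(1-KI) \neq \mu_1$, which is needed to ensure $\lambda_3 \neq 0$ and hence that $\lambda_2 = 0$ is a \emph{simple} eigenvalue; this is naturally satisfied throughout the parameter region where the classification $\tilde{E}_0 \leftrightarrow \tilde{E}_1$ is the relevant one (cf.\ Lemma \ref{local_SFE} and Remark \ref{remark_stab}).
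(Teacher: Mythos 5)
Your proposal is correct and follows essentially the same route as the paper: Sotomayor's theorem applied at $\tilde{E}_0$ with the (sign-equivalent) null vectors $v$ and $w$, and the same three transversality checks. The only differences are cosmetic — your value $-2\mu_1$ for the quadratic condition includes the symmetry factor of $2$ that the paper's reported value $-\mu_1$ omits (both are nonzero, so nothing changes), and your explicit flagging of the non-degeneracy condition $b_2/(1-KI)\neq\mu_1$ ensuring the zero eigenvalue is simple is a welcome addition the paper leaves implicit.
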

 
\begin{proof} 
Using Sotomayor's theorem \cite[Ch. 4, Thm. 1,]{perko}, we can prove that there is a transcritical bifurcation for which $\tilde{E}_1$ emanates from $\tilde{E}_0$ as soon as $b_1$ increases past $\correz{\mu_1}$. Focusing on $\tilde{E}_0$ and the parameter $b_1$, we can observe that the first and the third eigenvalues of \eqref{eqn:Jac_matrix_E0} are always negative, while the second one vanishes if $b_1^*:=b_1 = \correz{\mu_1}$. In this case, right and left eigenvectors corresponding to the zero eigenvalue are given by $ v =\left[1,-1,0 \right]$ and $w= \left[0,1,0 \right]$.

We denote with $F(U,M,Z)= \left[F^1,F^2,F^3 \right]^T$ system's \eqref{mod_1} right-hand side. Since 
\begin{equation*}
    F_{b_1} = [-UM, UM, 0]^T  \quad \mbox{and} \quad DF_{b_1} = \begin{bmatrix}
    -M & -U & 0 \\
    M & U & 0 \\
    0 & 0 & 0
\end{bmatrix} \,,
\end{equation*}
where $F_{b_1}$ indicates the derivative of $F$ with respect to $b_1$, and $DF_{b_1}$ the Jacobian matrix of this derivative, it follows that
\begin{equation*}
  w^T F_{b_1} (\tilde{E}_0, b_1^*) = 0  \quad \mbox{and} \quad w^T \left[ DF_{b_1} (\tilde{E}_0, b_1^*) \, v \right] = - 1 \neq 0 \,.
\end{equation*}
Thus, the first two sufficient conditions of Sotomayor's theorem to ensure the existence of a transcritical bifurcation from $\tilde{E}_0$ at $b_1 = b_1^*$ are verified. We observe that the second partial derivatives of $F^2$ are $F^2_{UU} = F^2_{MM} = F^2_{ZZ} = F^2_{UZ} = F^2_{MZ} =0 $ and $F^2_{UM} = b_1$ and thus at this equilibrium all second derivatives vanish, except for $F^2_{UM}$. In summary, we find
\begin{equation*}
  w^T \left[D^2 F (\tilde{E}_0,b_1^*) (v,v)\right] = D^2 F^2 (\tilde{E}_0,b_1^*)(v,v) 
  = -\correz{\mu_1} \neq 0
\end{equation*}
so that also the third condition  of Sotomayor's theorem for the occurrence of a transcritical bifurcation is satisfied. 
\end{proof}

\begin{restatable}{proposition}{TB_E0_E2}
\label{TB_E0_E2}
  System \eqref{mod_1} exhibits a transcritical bifurcation at $b_2^*:=b_2 = \correz{\mu_1} (1-IK)$ between $\tilde{E}_0$ and $\tilde{E}_2$ .
\end{restatable}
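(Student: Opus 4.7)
The plan is to mirror the structure of Proposition \ref{TB_E0_E1} and apply Sotomayor's theorem, this time at the equilibrium $\tilde{E}_0$ along the parameter $b_2$, to establish that $\tilde{E}_2$ emanates from $\tilde{E}_0$ as $b_2$ crosses $b_2^* = \mu_1(1-KI)$. First, I would revisit the Jacobian at $\tilde{E}_0$ computed in Lemma \ref{local_MSFE}, and observe that at $b_2 = b_2^*$ the third eigenvalue $\lambda_3 = b_2/(1-KI) - \mu_1$ vanishes, while $\lambda_1 = -\mu_1 < 0$ and $\lambda_2 = b_1 - \mu_1$ is assumed to be nonzero (in the generic regime where $\tilde{E}_1$ and $\tilde{E}_2$ do not simultaneously collide with $\tilde{E}_0$, which is precisely the threshold case treated separately in Section \ref{sec:threshold}).

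Next, I would compute the right and left eigenvectors associated with the zero eigenvalue. Solving $J_{|\tilde{E}_0} v = 0$ at $b_2 = b_2^*$ gives $v = [-1,0,1]^T$ (up to scaling), and solving $w^T J_{|\tilde{E}_0} = 0$ gives $w = [0,0,1]^T$. Denoting the right-hand side of \eqref{mod_1} by $F = [F^1,F^2,F^3]^T$, the key computations are then:
\begin{equation*}
F_{b_2} = \left[-\frac{UZ}{1-KI},\,0,\,\frac{UZ}{1-KI}\right]^T, \qquad DF_{b_2}(\tilde{E}_0,b_2^*) = \begin{pmatrix} 0 & 0 & -\frac{1}{1-KI} \\ 0 & 0 & 0 \\ 0 & 0 & \frac{1}{1-KI} \end{pmatrix}.
\end{equation*}
A direct substitution yields $w^T F_{b_2}(\tilde{E}_0, b_2^*) = 0$ (since $Z=0$ at $\tilde{E}_0$) and $w^T [DF_{b_2}(\tilde{E}_0,b_2^*)\,v] = \frac{1}{1-KI} \neq 0$, verifying the first two Sotomayor conditions.

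For the third condition, I would examine the second partial derivatives of $F^3 = \frac{b_2}{1-KI}UZ - \mu_1 Z$. All of them vanish at $\tilde{E}_0$ except $F^3_{UZ} = b_2/(1-KI) = \mu_1$ at the bifurcation point. The symmetric bilinear form evaluated on $v = [-1,0,1]^T$ then gives
\begin{equation*}
w^T\bigl[D^2F(\tilde{E}_0,b_2^*)(v,v)\bigr] = 2F^3_{UZ}\,v_1 v_3 = -2\mu_1 \neq 0,
\end{equation*}
completing the verification of Sotomayor's transcritical bifurcation conditions. The main (mild) obstacle is purely bookkeeping: keeping the factor $1/(1-KI)$ consistent throughout since $b_2$ appears in $F$ only through the composite rate $\bar{b_2}(I) = b_2/(1-KI)$, so derivatives with respect to $b_2$ must carry this factor. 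Otherwise, the argument is fully analogous to the proof of Proposition \ref{TB_E0_E1}.
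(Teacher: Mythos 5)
Your proposal is correct and follows essentially the same route as the paper: Sotomayor's theorem at $\tilde{E}_0$ with the zero eigenvalue $\lambda_3$, left eigenvector $w=[0,0,1]$, right eigenvector $v=\pm[1,0,-1]$, and the same three nondegeneracy computations (the sign discrepancy in $w^T[DF_{b_2}v]$ comes only from your opposite choice of sign for $v$, and your $-2\mu_1$ versus the paper's $-\mu_1$ in the quadratic term is a harmless factor-of-two convention in the bilinear form; both are nonzero, which is all Sotomayor requires). Your explicit remark that $b_1\neq\mu_1$ is needed for the zero eigenvalue to be simple is a small point the paper leaves implicit.
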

 
\begin{proof} 
Using Sotomayor's theorem once again, we prove that there is a transcritical bifurcation for which $\tilde{E}_2$ emanates from $\tilde{E}_0$ as soon as $b_2$ increases past $\correz{\mu_1}(1-KI)$. Focusing on $\tilde{E}_0$ and the parameter $b_2$, we can observe that the first and the second eigenvalues of \eqref{eqn:Jac_matrix_E0} are always negative, while the third one vanishes if $b_2^*:=b_2 = \correz{\mu_1}(1-KI)$. In this case, right and left eigenvectors corresponding to the zero eigenvalue are given by $ v =\left[1,0, -1 \right]$ and $w= \left[0,0,1 \right]$.

We denote by $F(U,M,Z)= \left[F^1,F^2,F^3 \right]^T$ the system's \eqref{mod_1} right-hand side. Since 
\begin{equation*}
    F_{b_2} = \left[-\dfrac{UZ}{1-KI}, 0, \dfrac{UZ}{1-KI}\right]^T  \quad \mbox{and} \quad DF_{b_2} = \begin{bmatrix}
    -\dfrac{Z}{1-KI} & 0 & -\dfrac{U}{1-KI}\\
    0 & 0 & 0 \\
    \dfrac{Z}{1-KI} & 0 & \dfrac{U}{1-KI}
\end{bmatrix} \,,
\end{equation*}
where $F_{b_2}$ indicates the derivative of $F$ with respect to $b_2$, and $DF_{b_2}$ the Jacobian matrix of this derivative, it follows that
\begin{equation*}
  w^T F_{b_2} (\tilde{E}_0, b_2^*) = 0  \quad \mbox{and} \quad w^T \left[ DF_{b_2} (\tilde{E}_0, b_2^*) \, v \right] = - \dfrac{1}{1-KI} \neq 0 \,.
\end{equation*}
Thus, the first two sufficient conditions of Sotomayor's theorem to ensure the existence of a transcritical bifurcation from $\tilde{E}_0$ at $b_2 = b_2^*$ are verified. We observe that the second partial derivatives of $F^3$ are $F^3_{UU} = F^3_{MM} = F^3_{ZZ} = F^3_{UM} = F^3_{MZ} =0 $ and 
$$F^3_{UZ} = \dfrac{b_2}{1-KI},$$
and thus at this equilibrium all vanish, except for $F^3_{UZ}$. In summary, we find
\begin{equation*}
  w^T \left[D^2 F (\tilde{E}_0,b_2^*) (v,v)\right] = D^2 F^3 (\tilde{E}_0,b_2^*)(v,v) 
  = -\correz{\mu_1} \neq 0
\end{equation*}
so that also the third condition of Sotomayor's theorem for the occurrence of a transcritical bifurcation is satisfied. 
\end{proof}

\begin{restatable}{proposition}{TB_E0_E3}
\label{TB_E0_E3}
  Let $b := b_1 =  \dfrac{b_2}{\correz{\mu_1}(1-KI)}$ hold. Then, system \eqref{mod_1} exhibits a transcritical bifurcation at $b^* = \correz{\mu_1}$ between $\tilde{E}_0$ and $\tilde{E}_3$.
\end{restatable}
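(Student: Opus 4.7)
The proof will follow the Sotomayor-theorem strategy used in Propositions \ref{TB_E0_E1} and \ref{TB_E0_E2}, but with an important twist. In the threshold regime $b := b_1 = b_2/(1-KI)$, both eigenvalues $\lambda_2$ and $\lambda_3$ of the Jacobian $J_{|\tilde{E}_0}$ vanish simultaneously at $b^* = \mu_1$, so Sotomayor's theorem cannot be applied directly in three dimensions. Geometrically, this double zero reflects the fact that $\tilde{E}_3$ is a line, not an isolated point.

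To sidestep the degeneracy, I would exploit the $M \leftrightarrow Z$ symmetry already highlighted in Section \ref{sec:threshold}: introducing $L = M + Z$ reduces the dynamics to system \eqref{pat_sys}, in which the line $\tilde{E}_3$ collapses to the single equilibrium $\tilde{E} = (1/\R0^{\text{f}}, 1 - 1/\R0^{\text{f}})$. Moreover, since $U + L$ is still conserved in \eqref{pat_sys}, the flow restricted to the invariant line $U + L = 1$ is governed by the scalar equation
\begin{equation*}
\frac{\textup{d}U}{\textup{d}t} = (1-U)(\mu_1 - bU) \eqqcolon f(U; b),
\end{equation*}
so Sotomayor's theorem collapses to three scalar conditions at $(U^*, b^*) = (1, \mu_1)$.

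A direct calculation gives $f(1; \mu_1) = 0$, $f_U(1; \mu_1) = 0$ (the vanishing simple eigenvalue), and $f_b(1; \mu_1) = 0$, together with the nondegeneracy checks $f_{Ub}(1; \mu_1) = 1 \neq 0$ and $f_{UU}(1; \mu_1) = 2\mu_1 \neq 0$. Taking both right and left eigenvectors equal to $1$, these are precisely the three sufficient conditions of Sotomayor's theorem for a transcritical bifurcation at $b^* = \mu_1$. Lifting back to the three-dimensional system, for $b > \mu_1$ the entire one-parameter family $\tilde{E}_3$ emanates from $\tilde{E}_0$, and the sign of $f_U$ for $U$ slightly below $1$ confirms the expected stability exchange.

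The principal obstacle is conceptual rather than computational: the usual transcritical paradigm describes two isolated equilibria exchanging stability, whereas here an entire line of equilibria peels off from $\tilde{E}_0$. The reduction through $L = M + Z$ together with the conservation law $U + L = 1$ is essential to quotient out the symmetric degeneracy and cast the bifurcation in the standard Sotomayor form used in Propositions \ref{TB_E0_E1} and \ref{TB_E0_E2}.
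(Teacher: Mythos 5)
Your proof is correct, but it takes a genuinely different route from the paper's. The paper disposes of this proposition in one line, declaring it ``a particular case of Propositions \ref{TB_E0_E1} and \ref{TB_E0_E2}'', i.e.\ it reuses the same Sotomayor computation at $\tilde E_0$ with the identification $b=b_1=b_2/(1-KI)$. You instead observe -- correctly -- that at the threshold the two eigenvalues $\lambda_2=b_1-\mu_1$ and $\lambda_3=b_2/(1-KI)-\mu_1$ vanish \emph{simultaneously} at $b^*=\mu_1$, so the zero eigenvalue of $J_{|\tilde E_0}$ is not simple and Sotomayor's theorem does not apply directly in three dimensions; this is a real degeneracy that the paper's one-line argument glosses over, and it is exactly the reason $\tilde E_3$ is a line rather than an isolated equilibrium. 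Your resolution -- quotienting by the $M\leftrightarrow Z$ symmetry via $L=M+Z$ to recover system \eqref{pat_sys}, then restricting to the invariant line $U+L=1$ to get the scalar equation $\dot U=(1-U)(\mu_1-bU)$ and checking $f_b=0$, $f_{Ub}=1\neq0$, $f_{UU}=2\mu_1\neq0$ at $(1,\mu_1)$ -- is clean and the computations are right. What your approach buys is rigour at the degenerate point and a transparent explanation of why a whole line of equilibria peels off; what it costs is the final ``lifting back'' step, which you state somewhat informally: strictly, the scalar bifurcation gives the branching of $\tilde E$ in the $(U,L)$ quotient, and one should add a sentence noting that each point of the line $\tilde E_3$ projects to this single equilibrium and that the remaining eigenvalue $\lambda_1=-\mu_1$ transverse to the reduction stays bounded away from zero, so the bifurcation structure persists in the full three-dimensional system. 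With that sentence added, your argument is a strictly more careful proof than the one in the paper.
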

 
\begin{proof} 
The claim follows as a particular case of Propositions \ref{TB_E0_E1} and \ref{TB_E0_E2}. 
\end{proof}

\section{Slow systems: equilibria and stability} \label{sec:slow}

In singularly perturbed system of ODEs in standard form, such as system \eqref{mod_3}, the set of equilibria for the fast variable plays a fundamental role in the dynamics. Such set is called the \textit{critical manifold}.

As a consequence of Theorem \ref{thm:equilibria}, the critical manifold of system \eqref{mod_3} is given by the union of the following three sets, which exists in the biologically relevant region according to the conditions outlined in Theorem \ref{thm:equilibria}:
\begin{equation}\label{eq:crit_manif}
  \begin{aligned} \mathcal{C}_0 = \mathcal{C}_0^0 \cup \mathcal{C}_0^1 \cup \mathcal{C}_0^2 :=& \{ (U,M,Z,S,I,R) \in \mathbb{R}^6 \; | \; (U,M,Z)=\Tilde{E}_0, \;S,I,R \geq 0, S+I+R=1 \} \;\cup\\
  & \{ (U,M,Z,S,I,R) \in \mathbb{R}^6 \; | \; (U,M,Z)=\Tilde{E}_1, \;S,I,R \geq 0, S+I+R=1 \} \;\cup\\
  &\{ (U,M,Z,S,I,R) \in \mathbb{R}^6 \; | \; (U,M,Z)=\Tilde{E}_2(I), \;S,I,R \geq 0, S+I+R=1 \} .      
  \end{aligned} 
\end{equation}

In this section, we study the behaviour of the slow system \eqref{mod_2} on each of the three branches of the critical manifold. Recall that the exact formulation of each equilibrium is given as $\tilde{E}_0$ \eqref{MSFE_eq}, $\tilde{E}_1$ \eqref{eq:SFE} and $\tilde{E}_2$ \eqref{eq:MFE} in the previous section. It is natural to study the evolution of the systems presented in this section in the slow time scale $\tau$. \correz{The results obtained in this section, when combined with the ones obtained in section \ref{sec:fast}, provide insight on the global behaviour of the perturbed system \eqref{mod_3} for $0<\varepsilon\ll 1$.}

\subsection{The first branch of the critical manifold, $\Tilde{E}_0$}\label{sec:lento_0}

On this branch, we have
$$
 \correz{\bar{\beta}(M,Z)}=\bar{\beta}(0,0)= \beta \dfrac{1+0}{1+0}=\beta.
$$
Hence, the slow system on $\mathcal{C}_0^0$ evolves according to the following system of ODEs:
\begin{subequations}\label{lento_0}
\begin{eqnarray}
\frac{\text{d}S}{\text{d} \tau}&=&  \correz{\mu_2}
   -\beta S I+\eta R- \correz{\mu_2} S,\label{lento_0_S}\\ 
\frac{\text{d}I}{\text{d} \tau}&=&  \beta S I- \gamma I-\correz{\mu_2} I ,\label{lento_0_I}   \\
\frac{\text{d}R}{\text{d} \tau}&=& \gamma I- \eta R-\correz{\mu_2} R\label{lento_0_R}, 
\end{eqnarray}
\end{subequations}
which is a classical SIRS model with demography and constant population. The corresponding Basic Reproduction Number is 
\begin{equation}\label{eq:rzerozero}
\mathcal{R}_0^{s,0}=\dfrac{\beta}{\gamma+\correz{\mu_2}}.
\end{equation}
It is well known \cite{o2010lyapunov} that if $\mathcal{R}_0^{s,0}<1$ the system \eqref{lento_0} converges to its Disease Free Equilibrium $(S,I,R)=(1,0,0)$, whereas if $\mathcal{R}_0^{s,0}>1$, there exists only a unique Endemic Equilibrium $E^{*,0} = (S^{*,0},I^{*,0},R^{*,0})$
\begin{equation}\label{EE_C00}
S^{*,0} = \dfrac{\gamma + \correz{\mu_2}}{\beta}, \qquad I^{*,0} = \dfrac{(\beta - \gamma - \correz{\mu_2})(\eta + \correz{\mu_2})}{\beta (\gamma + \eta + \correz{\mu_2})}, \qquad R^{*,0} = \dfrac{\gamma I^{*,0}}{\eta + \correz{\mu_2}},
\end{equation}
which is globally stable for system \eqref{lento_0}.

\subsection{The second branch of the critical manifold, $\Tilde{E}_1$}\label{sec:lento_1}

On this branch, we have
\begin{equation}\label{eq:betabarrE_1}
 \correz{\bar{\beta}(M,Z)}=\bar{\beta}\left(\dfrac{b_1-\correz{\mu_1}}{b_1},0\right)= \beta \dfrac{1+\dfrac{b_1-\correz{\mu_1}}{b_1}}{1+0}=\beta\dfrac{2b_1-\correz{\mu_1}}{b_1}.    
\end{equation}
Notice that, by the assumption on existence of $\tilde{E}_1$, $\correz{\mu_1}<b_1$, hence $ \correz{\bar{\beta}(M,Z)}>\beta$. From a biological point of view, this means that the absence of skeptical individuals increases the spread of the disease.

Then, the slow system on $\mathcal{C}_0^1$ evolves according to the following system of ODEs:
\begin{subequations}\label{lento_1}
\begin{eqnarray}
\frac{\text{d}S}{\text{d} \tau}&=&  \correz{\mu_2}
   -\correz{\beta\dfrac{2b_1-\correz{\mu_1}}{b_1}} S I+\eta R- \correz{\mu_2} S ,\label{lento_1_S}\\ 
\frac{\text{d}I}{\text{d} \tau}&=&  \correz{\beta\dfrac{2b_1-\correz{\mu_1}}{b_1}} S I- \gamma I-\correz{\mu_2} I ,\label{lento_1_I}   \\
\frac{\text{d}R}{\text{d} \tau}&=& \gamma I- \eta R-\correz{\mu_2} R \label{lento_1_R}, 
\end{eqnarray}
\end{subequations}
which is again a classical SIRS model with demography and constant population. The corresponding Basic Reproduction Number is 
\begin{equation}\label{eq:rzerouno}
\mathcal{R}_0^{s,1}=\dfrac{\beta}{\gamma+\correz{\mu_2}}\cdot\correz{\dfrac{2b_1-\correz{\mu_1}}{b_1}}.
\end{equation}
Furthermore, compared to the dynamics described in Section \ref{sec:lento_0}, due to the influence of misinformation spreading, represented by the parameter $b_1$, the Basic Reproduction Number is larger. In particular, one could imagine a situation in which $\mathcal{R}_0^{s,0}$ \eqref{eq:rzerozero} is smaller than 1, while $\mathcal{R}_0^{s,1}$ \eqref{eq:rzerouno} is larger than 1. This means that a disease which would naturally become extinct is ``kept alive'' by misinformed individual making it endemic, as illustrated in Figure \ref{fig:case_3}.

Similarly to the previous scenario, it is well known \cite{o2010lyapunov} that if $\mathcal{R}_0^{s,1}<1$ the system \eqref{lento_1} converges to its Disease Free Equilibrium $(S,I,R)=(1,0,0)$, whereas if $\mathcal{R}_0^{s,1}>1$, there exists a unique Endemic Equilibrium $E^{*,1} = (S^{*,1},I^{*,1},R^{*,1})$
\begin{equation}\label{EE_C01}\correz{
S^{*,1} = \dfrac{(\gamma + \correz{\mu_2} )b_1}{\beta (2 b_1 - \correz{\mu_1})}, \qquad I^{*,1} = \dfrac{(\beta (2b_1-\correz{\mu_1})-b_1(\gamma + \correz{\mu_2} ))(\eta + \correz{\mu_2})}{\beta (2b_1-\correz{\mu_1})(\gamma + \eta + \mu_2)}, \qquad R^{*,1} = \dfrac{\gamma I^{*,1}}{\eta + \mu_2},}
\end{equation}
which is globally asymptotically stable for system \eqref{lento_1}. 

\subsection{The third branch of the critical manifold, $\Tilde{E}_2$}\label{sec:lento_2}

On this branch, we have
$$
\correz{\bar{\beta}(M,Z)}=\bar{\beta}\left(0,\dfrac{b_2-\correz{\mu_1}(1-KI)}{b_2}\right)=\beta \dfrac{1+0}{1+\dfrac{b_2-\correz{\mu_1}(1-KI)}{b_2}}=\beta \dfrac{b_2}{2b_2-\correz{\mu_1}(1-KI)}.
$$
Notice that, by the feasibility assumption of $\tilde{E}_2$, $b_2>\correz{\mu_1}(1-KI)$, hence $\correz{\bar{\beta}(M,Z)}<\beta$. From a biological viewpoint, this means that the absence of misinformed individuals and the presence of skeptical individuals decrease the infection rate. This can be explained by the fact that aware individuals directly decrease the spread of the disease.

Hence, the slow system on $\mathcal{C}_0^2$ evolves according to the following system of ODEs:
\begin{subequations}\label{lento_2}
\begin{eqnarray}
\frac{\text{d}S}{\text{d} \tau}&=&  \correz{\mu_2}
   -\beta \dfrac{b_2}{2b_2-\correz{\mu_1}(1-KI)} S I+\eta R- \correz{\mu_2} S ,\label{lento_2_S}\\ 
\frac{\text{d}I}{\text{d} \tau}&=&  \beta \dfrac{b_2}{2b_2-\correz{\mu_1}(1-KI)} S I- \gamma I-\correz{\mu_2} I ,\label{lento_2_I}   \\
\frac{\text{d}R}{\text{d} \tau}&=& \gamma I- \eta R-\correz{\mu_2}R \label{lento_2_R}. 
\end{eqnarray}
\end{subequations}
Moreover, compared to the systems described in Sections \ref{sec:lento_0} and \ref{sec:lento_1}, the infected population plays a non-trivial role in the infection parameter. 

System \eqref{lento_2} always admits a Disease Free Equilibrium $(S,I,R)=(1,0,0)$. 
The disease compartment is $I$, thus we consider
\begin{equation*}
\frac{\text{d}I}{\text{d} \tau}= \beta \dfrac{b_2}{2b_2-\correz{\mu_1}(1-KI)} S I- \gamma I-\correz{\mu_2} I ,
\end{equation*}
where we define
\begin{equation*}
\mathcal{F} \coloneqq  \beta \dfrac{b_2}{2b_2-\correz{\mu_1}(1-KI)} S I \qquad \text{and}  \qquad \mathcal{V} \coloneqq (\gamma+\correz{\mu_2}) I.
\end{equation*}
We thus obtain
\begin{equation*}
    F = \dfrac{\partial \mathcal{F}}{\partial I} \bigg\rvert_{(S,I,R)=(1,0,0)} = \dfrac{\beta b_2}{2b_2 -\correz{\mu_1}} \qquad \text{and} \qquad V= \gamma+\correz{\mu_2},
\end{equation*}
from which 
\begin{equation}\label{eq:rzerodue}
    \mathcal{R}_0^{s,2} = \dfrac{\beta}{\gamma + \correz{\mu_2}} \cdot \dfrac{b_2}{2 b_2 - \correz{\mu_1}}.
\end{equation}
For system \eqref{lento_2} we can use the results from \cite{lahrouz2012complete} to ensure convergence to the DFE when $\mathcal{R}_0^{s,2}<1$, and when $\mathcal{R}_0^{s,2}>1$ there  exists a unique Endemic Equilibrium $E^{*,2} = (S^{*,2},I^{*,2},R^{*,2})$
\begin{equation}\label{EE_C02}
S^{*,2}= \dfrac{(\gamma + \correz{\mu_2}) ( 2b_2 - \correz{\mu_1}(1-KI^{*,2}))}{\beta b_2}, \quad I^{*,2} = \dfrac{(\gamma+\correz{\mu_2})(\beta b_2 - (\gamma+\correz{\mu_2})(2b_2-\correz{\mu_1}))}{\beta b_2 (\eta + \correz{\mu_2} + \gamma) +\correz{\mu_1} K ( \eta+\correz{\mu_2})(\gamma+\correz{\mu_2})}, \quad R^{*,2} = \dfrac{\gamma I^{*,2}}{\eta + \correz{\mu_2}},
\end{equation}
which is globally asymptotically stable for system \eqref{lento_2}. Notice that $\mathcal{R}_0^{s,2}>1$ if and only if $I^{*,2}>0$; moreover, after some calculations, one may check that $1-S^{*,2}=I^{*,2}+R^{*,2}$. Indeed, given $S^{*,2}$ and $R^{*,2}$ from equations \eqref{lento_2_I} and \eqref{lento_2_R}, equation \eqref{lento_2_S} becomes
$$
\correz{\mu_2}(1-S^{*,2}) - (\gamma + \correz{\mu_2})I^{*,2} + \dfrac{\eta \gamma I^{*,2}}{\eta + \correz{\mu_2}} = 0,
$$
and substituting $1-S^{*,2} = I^{*,2} + R^{*,2}$ we obtain
$$
\correz{\mu_2}\left(I^{*,2} + \dfrac{\gamma I^{*,2}}{\eta + \correz{\mu_2}}\right) - (\gamma + \correz{\mu_2})I^{*,2} + \dfrac{\eta \gamma I^{*,2}}{\eta + \correz{\mu_2}} = 0,$$
from which 
$$\correz{\mu_2} (\eta + \correz{\mu_2}) + \correz{\mu_2} \gamma - (\gamma + \correz{\mu_2}) (\eta + \correz{\mu_2}) + \eta \gamma = 0,
$$
which is a tautology.

To clarify the use of \cite[Thm. 3.1]{lahrouz2012complete}, compared to their notation we have: $b=\correz{\mu_2}$ (birth rate equal to mortality rate), $p=0$ (no vaccination rate), $c=0$ (no additional disease mortality), 
$$
\psi(I)=\dfrac{2b_2-\correz{\mu_1}(1-KI)}{2b_2-\correz{\mu_1}} \qquad \text{and} \qquad \beta=\dfrac{\beta b_2}{2b_2-\correz{\mu_1}}.
$$
All the hypotheses needed to use \cite[Thm. 3.1]{lahrouz2012complete} are then satisfied, and we can conclude global stability of the DFE or of the EE depending on $\mathcal{R}_0^{s,2}\lessgtr 1$.

Mirroring the remark in Section \ref{sec:lento_1}, we notice here that a disease characterized by parameters for which $\mathcal{R}_0^{s,0}>1$ might be brought to extinction, meaning $\mathcal{R}_0^{s,2}<1$, if enough individuals are skeptical of disease-related misinformation. We illustrate this example in Figure \ref{fig:case_4}.

\section{Delayed loss of stability: the entry-exit function}\label{sec:entry-exit}

Depending on specific relations between parameters of the fast systems \eqref{mod_1} and parameters of the slow system \eqref{mod_2}\correz{, evolving on the three branches of the critical manifold \eqref{eq:crit_manif} as detailed in section \ref{sec:slow}}, two scenarios can happen in the perturbed system \eqref{mod_3}. In this section, we explore these two possibilities.

Recall Lemmas \ref{local_MSFE}, \ref{local_SFE}, \ref{local_MFE} and Table \ref{tab:tabella}. Due to the corresponding existence constraints, each branch of the critical manifold has exactly one eigenvalue which potentially changes sign under the slow flow, namely $\lambda_3$.

\correz{Assume that the fast flow brought the dynamics in the neighbourhood of a stable branch (i.e. the corresponding $\lambda_3<0$) of the critical manifold.} If the branch of the critical manifold which the fast system approaches does not lose stability under the slow flow (the first possible scenario), the system simply exhibits convergence towards the unique stable equilibrium on that branch. In particular, this is always the case when the value $\mathcal{R}_0^{s,i}<1$, which implies monotone convergence of $I \to 0$, maintaining the corresponding eigenvalue negative.

Hence, we are particularly interested in the cases in which $\mathcal{R}_0^{s,i}>1$, and for which the value of the infected population at the endemic equilibrium $I^{*,i}$ potentially makes the corresponding $\lambda_3>0$ (the second possible scenario). We refer to Section \ref{sec:numerics} for numerical simulations of various interesting scenarios.

Since one of the eigenvalues potentially changes its sign during the slow flow, the critical manifold is not uniformly hyperbolic, and thus standard GSPT theory can not be applied. Indeed, one of the original assumptions of classical Fenichel's theory \correz{\cite{fenichel1979geometric,hek2010geometric,jones1995geometric,kuehn2015multiple}} was that of uniform hyperbolicity of the critical manifold, meaning that the eigenvalues corresponding to the fast variables on the critical manifold should be uniformly bounded away from the imaginary axis of the complex plane, to ensure either local stability or instability. However, it is quite common in epidemic modelling that the eigenvalues driving the slow part of the dynamics change sign, necessarily vanishing in the process, and this is actually the mechanisms behind long dormant stages of the epidemic between consequent waves \cite{jardon2021geometric1,jardon2021geometric2,dellamarca2023geometric,kaklamanos2023geometric,achterberg2023minimal}.

Systems of this kind may exhibit a delayed loss of stability; meaning, orbits of the corresponding system remain close to a repelling branch of the critical manifold for a long time, before finally leaving a neighbourhood of said manifold. In order to measure this permanence, a very useful tool is the so-called \textit{entry-exit function} \cite{de2008smoothness,de2016entry}. 

More specifically, in its lowest dimensional formulation, the entry-exit function applies to planar systems of the form 
\begin{equation}\label{eq:entex}
\begin{aligned}
x'&= f(x,y,\varepsilon)x,\\
y'&=\varepsilon g(x,y,\varepsilon),
\end{aligned}
\end{equation}
with $(x,y)\in \mathbb{R}^2$, $g(0,y,0)>0$ and $\textnormal{sign}(f(0,y,0))=\textnormal{sign}(y)$. Note that for $\varepsilon=0$, the $y$-axis consists of normally attracting/repelling equilibria if $y$ is negative/positive, respectively.
\begin{figure}[htbp]\centering
	\begin{tikzpicture}
		\node at (0,0){\includegraphics[scale=0.85]{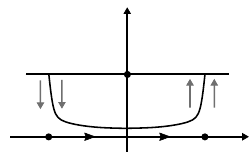}};
		\node at (2,-.9){$y$};
		\node at (0.05,1.15){$x$};
		\node at (-1.1,.2){$x=x_0$};
		\node at (-1,-1.15) {$y_0$};
		\node at ( 1.1,-1.15) {$p_\varepsilon(y_0)$};
	\end{tikzpicture}
	\caption{Visualization of the entry-exit map on the line $x=x_0$. The point $(y,x)=(0,0)$ is non-hyperbolic.} 
	\label{fig:entrex2D}
\end{figure}

Consider a horizontal line $\{x=x_0\}$, close enough to the $y$-axis to obey the attraction/repulsion assumed above. An orbit of \eqref{eq:entex} that intersects such a line at $y=y_0<0$ (entry) re-intersects it again (exit) at $y=p_\varepsilon(y_0)$, as sketched in Figure~\ref{fig:entrex2D}.

As $\varepsilon \rightarrow 0$, the image of the return map $p_\varepsilon(y_0)$ to the horizontal line $x=x_0$ approaches $p_0(y_0)$ given implicitly by
\begin{equation}\label{eq:pzero}
\int_{y_0}^{p_0(y_0)} \frac{f(0,y,0)}{g(0,y,0)}\textnormal{d}y = 0.
\end{equation}
This construction can be generalized to higher dimensional systems, such as the one we are studying in this paper. For a more precise description of the planar case, we refer to \cite{de2008smoothness,de2016entry} or the preliminaries of \cite{jardon2021geometric1}. For more general theorems, we refer the interested reader to \cite{kaklamanos2022entry,liu2000exchange,neishtadt1987persistence,neishtadt1988persistence,schecter2008exchange}.

This tool has proven to be remarkably useful in particular in epidemic modelling evolving on multiple time scales, as it provides a measure of the dormant phase between subsequent waves of an epidemic \cite{achterberg2023minimal,dellamarca2023geometric,jardon2021geometric1, jardon2021geometric2,kaklamanos2023geometric}. 

Unfortunately, the \correz{classical} entry-exit formulas \correz{fundamentally} rely on two \correz{additional} assumptions, \correz{one of which is not satisfied in our case. The first one is} the monotonicity of the eigenvalue which causes the delayed loss of stability. This is related to the assumption $g(0,y,0)>0$ in the planar case \eqref{eq:entex}. Indeed, as a consequence of our analysis in Section \ref{sec:slow}, when the corresponding slow Basic Reproduction Number $\mathcal{R}_0^{s,i}>1$, the slow system approaches its unique endemic equilibrium through damped oscillations. Since the eigenvalues we are interested in are functions of $I$, they are not monotone. A particularly pathological case is the one in which the interplay between the (mis)information spreading parameters and the epidemic spreading parameters is such that the value of $I$ at the endemic equilibrium (i.e. $I^{*,i}$) coincides with  $\lambda_3=0$. In that case, indeed, the eigenvalue $\lambda_3$ would potentially change sign multiple times throughout the slow flow, rendering analytical results about the permanence of orbits close to a specific branch of the critical manifold impossible to achieve with the currently available theory on entry-exit functions. \correz{We do not explicitly derive the conditions on the parameters for which this scenario happens, as the calculations are remarkably cumbersome and do not contribute to the overall understanding of the dynamics. Indeed, to check whether this is possible or not, one would need to use the endemic equilibrium values of $I^{*,i}$, namely \eqref{EE_C00}, \eqref{EE_C01} and \eqref{EE_C02}, and impose that they lie in the unstable region of the corresponding branch of the critical manifold; recall \eqref{eq:crit_manif}.} 

The second \correz{classical assumption of} the entry-exit function is the separation of eigenvalues. Namely, under the slow flow, the eigenvalues which provides (delayed) loss of stability should never intersect the remaining ones. A first result towards a generalization in this direction was recently obtained in \cite{kaklamanos2022entry}. However, the computations to check separation in our case are \correz{quite involved}, and considering that \correz{another hypothesis is} not satisfied, we do not include them.

Instead, we conjecture that, away from pathological scenarios in which the damped convergence towards the endemic equilibrium corresponds to multiple sign changes of the eigenvalue we focus on, similar entry-exit relations are to be expected, and we showcase some concordant simulations in the next section.

\section{Numerical simulations}\label{sec:numerics}

In this section, we provide various simulations of system \eqref{mod_3}, highlighting noteworthy transient and asymptotic dynamics depending on the selected values of the parameters in the system. Recall \eqref{eq:crit_manif} for the three branches of the critical manifold.

The parameters used in each simulation are specified in the corresponding caption. We remark that our choice of the parameters was dictated by the conditions needed to clearly visualize each scenario, rather than an attempt to closely match to real-world values of each quantity. The estimation of such parameters is outside the scope of the present work; moreover, it is extremely case-sensitive, and for the information layer, quite difficult to deduce. Hence, we renounce realism in favour of an increased clarity of the figures.

In Figure \ref{fig:case_1}, the system exhibits two entry-exit phenomena. First, the orbit approaches $\mathcal{C}_0^1$ (representing an absence of skeptical individuals in the population). After the peak of infection, the orbit ``jumps'' from $\mathcal{C}_0^1$ to $\mathcal{C}_0^2$ (representing an absence of misinformed individuals in the population). Around $t\approx 150$, the orbit ``jumps'' again, eventually settling on $\mathcal{C}_0^1$, with a corresponding high asymptotic value of infected individuals in the population. In Figure \ref{fig:case_6} instead, the orbit approaches first the branch $\mathcal{C}_0^1$ and later settles on $\mathcal{C}_0^2$.

In Figure \ref{fig:case_2}, we observe again the same two entry-exit phenomena, although the slow passage close to $M=0$ is much longer than the one in Figure \ref{fig:case_1}. Recall Section \ref{sec:threshold}, in which we identified a line of equilibria for the fast system. In the full system, only one equilibrium is present on this line, and we provide its formulation in Appendix \ref{app_eq}. The system then converges towards this equilibrium, leaving the critical manifold without ever approaching any of its branches again.

The next two numerical simulations showcase how the (mis)information spread may negatively or positively affect the evolution of the epidemic, depending on whether the slow dynamics evolves on the second or on the third branch of the critical manifold; recall \eqref{eq:crit_manif}.

In Figure \ref{fig:case_3}, we illustrate an example of how \correz{the presence of misinformed individuals and the absence of skeptical individuals} can negatively affect the evolution of the epidemic, making an infectious disease which would naturally die out become endemic; we refer to Section \ref{sec:lento_1} for more context.

In Figure \ref{fig:case_4}, instead, we illustrate an example of how \correz{the presence of skeptical individuals and the absence of misinformed individuals} can positively affect the evolution of the epidemic, making an infectious disease which would naturally be endemic evolve towards extinction; we refer to Section \ref{sec:lento_2} for more context.

Finally, as illustrated in Figure \ref{fig:case_5}, during our numerical exploration we found numerous parameter sets for which the system ``jumps'' between $\mathcal{C}_0^1$ and $\mathcal{C}_0^2$ multiple times before finally converging to an equilibrium. We remark that our theoretical analysis of system \eqref{mod_3} did \emph{not} exclude the possibility of a GSPT-specific class of periodic orbits, namely \emph{Mixed Mode Oscillations} (MMOs) \cite{brons2008introduction}. Indeed, the global stability analysis of the fast subsystem \eqref{mod_1} was performed assuming $I$ fixed. \correz{A situation in which an orbit keeps ``jumping'' between $\mathcal{C}_0^1$ and $\mathcal{C}_0^2$, for example, might exist. In fact,} once near any of the branches, if the corresponding $\mathcal{R}_0^{s,i}>1$, the solution will approach the EE of the slow flow. If such EE is in the repelling part of the critical manifold, the orbit would eventually leave its vicinity (we refer to Section \ref{sec:entry-exit} for a deeper explanation of this phenomenon), only to approach the other branch, and this cycle would repeat forever. However, we were not able to find parameter values for which we achieve this scenario in system \eqref{mod_3}. 

\begin{figure}[h]
     \centering
     \begin{subfigure}[b]{0.49\textwidth}
         \centering
\includegraphics[width=0.9\textwidth]{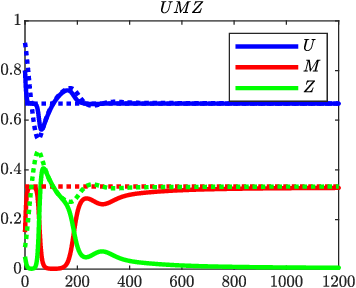}
         \caption{}
     \end{subfigure}
     \hfill
     \begin{subfigure}[b]{0.49\textwidth}
         \centering
\includegraphics[width=0.9\textwidth]{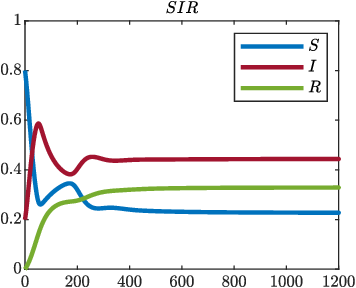}
         \caption{}
     \end{subfigure}
     \caption{Values of the parameters: $b_1 = 1.5$, $b_2=0.9$, $\beta = 6$, $\varepsilon = 1/100$, $\eta = 0.08$, $\correz{\mu_1}=\correz{\mu_2} = 1$, $\gamma = 0.8$, $K=0.9$. Initial conditions: $(U,M,Z,S,I,R) = (0.9, 0.15, 0.05, 0.8, 0.2, 0)$. Solid lines: corresponding orbit of system \eqref{mod_3}; dashed lines: branches of the critical manifold \eqref{eq:crit_manif}, except for $M=0$ and $Z=0$ which are not shown for clarity. We observe two entry-exit phenomena, with the solution approaching $\mathcal{C}_0^1$, then $\mathcal{C}_0^2$, before the asymptotic convergence towards the EE on $\mathcal{C}_0^1$.}
     \label{fig:case_1}
\end{figure}

\begin{figure}[h]
     \centering
     \begin{subfigure}[b]{0.49\textwidth}
         \centering
\includegraphics[width=0.9\textwidth]{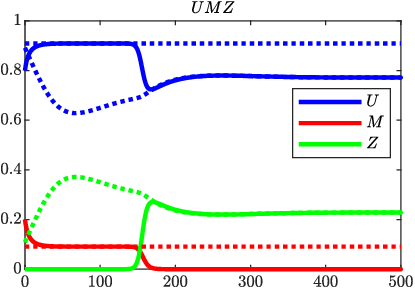}
         \caption{}
     \end{subfigure}
     \hfill
     \begin{subfigure}[b]{0.49\textwidth}
         \centering
\includegraphics[width=0.9\textwidth]{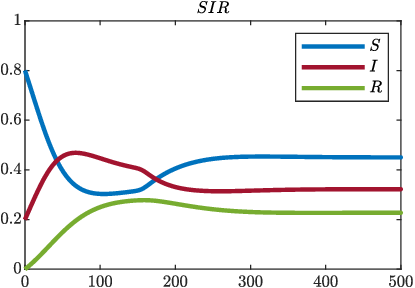}
         \caption{}
     \end{subfigure}
     \caption{Values of the parameters: $b_1 = 1.1$, $b_2=0.92$, $\beta = 6$, $\varepsilon = 1/100$, $\eta = 0.7$, $\correz{\mu_1}=\correz{\mu_2} = 1$, $\gamma = 1.2$, $K=0.9$. Initial conditions: $(U,M,Z,S,I,R) = (0.8, 0.2, 0, 0.8, 0.2, 0)$. Solid lines: corresponding orbit of system \eqref{mod_3}; dashed lines: branches of the critical manifold \eqref{eq:crit_manif}, except for $M=0$ and $Z=0$ which are not shown for clarity. We observe one entry-exit phenomenon, with the solution approaching $\mathcal{C}_0^1$ before the asymptotic convergence towards the EE on $\mathcal{C}_0^2$.}
     \label{fig:case_6}
\end{figure}

\begin{figure}[h!]
     \centering
     \begin{subfigure}[b]{0.49\textwidth}
         \centering
\includegraphics[width=0.9\textwidth]{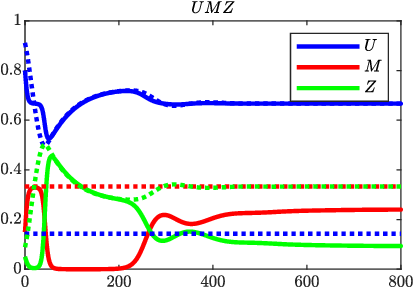}
         \caption{}
     \end{subfigure}
     \hfill
     \begin{subfigure}[b]{0.49\textwidth}
         \centering
\includegraphics[width=0.9\textwidth]{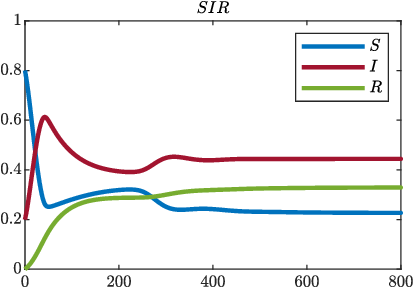}
         \caption{}
     \end{subfigure}
     \caption{Values of the parameters: $b_1 = 1.5$, $b_2=0.9$, $\beta = 7$, $\varepsilon = 1/100$, $\eta = 0.08$, $\correz{\mu_1}=\correz{\mu_2} = 1$, $\gamma = 0.8$, $K=0.9$. Initial conditions: $(U,M,Z,S,I,R) = (0.9, 0.15, 0.05, 0.8, 0.2, 0)$. Solid lines: corresponding orbit of system \eqref{mod_3}; dashed lines: branches of the critical manifold \eqref{eq:crit_manif}, except for $M=0$ and $Z=0$ which are not shown for clarity. We observe again two entry-exit phenomena, first close to $\mathcal{C}_0^2$, then close to $\mathcal{C}_0^1$, before the system converges towards the co-existence equilibrium.}
     \label{fig:case_2}
\end{figure}

\begin{figure}[h!]
     \centering
     \begin{subfigure}[b]{0.49\textwidth}
         \centering
\includegraphics[width=0.9\textwidth]{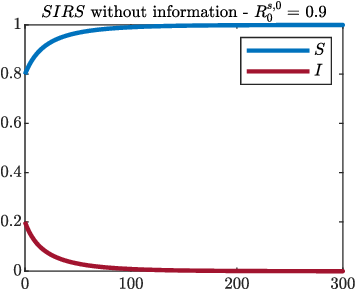}
         \caption{}
     \end{subfigure}
     \hfill
     \begin{subfigure}[b]{0.49\textwidth}
         \centering
\includegraphics[width=0.9\textwidth]{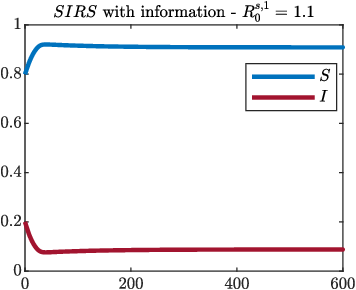}
         \caption{}
     \end{subfigure}
     \caption{Comparison between (a) the SIRS system \eqref{lento_0} without information (i.e., evolving on $\mathcal{C}_0^0$), naturally converging to the corresponding DFE, and (b) the SIRS system \eqref{lento_1} evolving on the misinformation branch of the critical manifold (i.e., $\mathcal{C}_0^1$), converging to the corresponding EE. Values of the parameters: $b_1 = 6.7041$, $b_2=5.9041$, $\beta = 4.8429$, $\varepsilon = 1/50$, $\eta = 0.1$, $\correz{\mu_1}=\correz{\mu_2} = 5.2143$, $\gamma = 0.1667$, $K=0.9$. Initial conditions: $(S,I,R) = (0.8, 0.2, 0)$. We do not show the evolution in time of $R$, since it remains close to 0 for all times.}
     \label{fig:case_3}
\end{figure}

\begin{figure}[h!]
     \centering
     \begin{subfigure}[b]{0.49\textwidth}
         \centering
\includegraphics[width=0.9\textwidth]{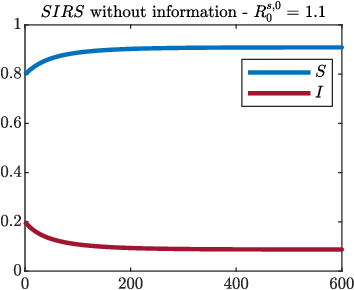}
         \caption{}
     \end{subfigure}
     \hfill
     \begin{subfigure}[b]{0.49\textwidth}
         \centering
\includegraphics[width=0.9\textwidth]{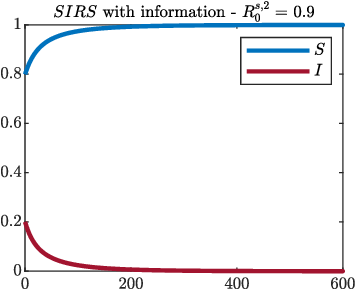}
         \caption{}
     \end{subfigure}
     \caption{Comparison between (a) the SIRS system \eqref{lento_0} without information (i.e., evolving on $\mathcal{C}_0^0$), naturally converging to the corresponding EE, and (b) the SIRS system \eqref{lento_2} evolving on the skeptical branch of the critical manifold (i.e., $\mathcal{C}_0^2$), converging to the corresponding DFE. Values of the parameters: $b_1 = 5.9041$, $b_2=6.7041$, $\beta = 5.919$, $\varepsilon = 1/50$, $\eta = 0.1$, $\correz{\mu_1}=\correz{\mu_2} = 5.2143$, $\gamma = 0.1667$, $K=0.9$. Initial conditions: $(S,I,R) = (0.8, 0.2, 0)$. We do not show the evolution in time of $R$, since it remains close to 0 for all times.}
     \label{fig:case_4}
\end{figure}

\begin{figure}[h!]
     \centering
     \begin{subfigure}[b]{0.49\textwidth}
         \centering
\includegraphics[width=0.9\textwidth]{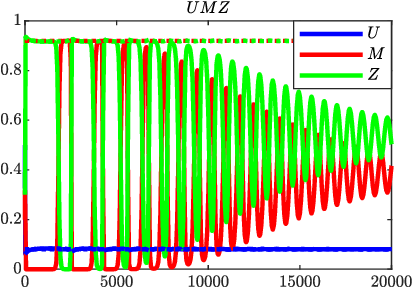}
         \caption{}
     \end{subfigure}
     \hfill
     \begin{subfigure}[b]{0.49\textwidth}
         \centering
\includegraphics[width=0.9\textwidth]{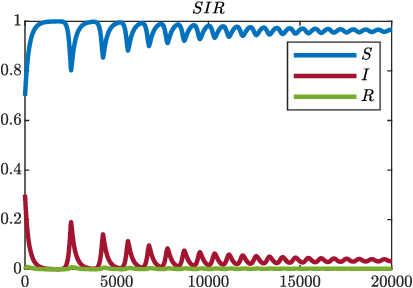}
         \caption{}
     \end{subfigure}
     \caption{Values of the parameters: $b_1 = 10.79$, $b_2=10.44$, $\beta =  1.0897$, $\varepsilon = 1/100$, $\eta = 0.4570$, $\correz{\mu_1}=\correz{\mu_2} = 0.87$, $\gamma = 0.0776$, $K=0.9$. Initial conditions: $(U,M,Z,S,I,R) = (0.5,0.2,0.3,0.7,0.3,0)$. Solid lines: corresponding orbit of system \eqref{mod_3}; dashed lines: branches of the critical manifold \eqref{eq:crit_manif}, except for $M=0$ and $Z=0$ which are not shown for clarity. We observe several entry-exit instances, before the system converges towards the co-existence equilibrium. We truncated the integration at $t=20000$ for clarity.}
     \label{fig:case_5}
\end{figure}

\section{\correz{Discussion and conclusions}}\label{sec:concl}
In this paper, we introduced a six dimensional nonlinear info-epidemic coupled model, in which the first three equations describe the fast dynamics (infodemic layer) while the last three the slow one (epidemic layer). First, we analyzed the infodemic model, which describes the dynamics in time of the Unaware, Misinformed and Skeptical populations. The reduced system has four different equilibrium points feasible and stable under certain conditions. Moreover, we proved that the system can exhibit three different transcritical bifurcations between the Misinformed-Skeptical-Free equilibrium and the remaining three ones (Skeptical-Free, Misinformed-Free and coexistence equilibria). 

The set of the first three equilibria of the fast model plays a crucial role in the dynamics of the coupled info-epidemic model, since these equilibria provide the formulation of the three branches of the critical manifold of the complete system. \correz{Subsequently, we studied the behaviour of the slow system on each of the three branches of the critical manifold. The first branch is the Misinformed-Skeptical-Free one, while the second and the third ones are the Skeptical-Free and the Misinformed-Free equilibria, respectively. Interestingly, we noticed that the slow system behaves as a classical SIRS model on the first branch of the critical manifold, corresponding to the Misinformed-Skeptical-Free equilibrium. Secondly, for the second branch of the critical manifold, corresponding to the Skeptical-Free equilibrium point, we found that the infection rate of the obtained slow SIRS model is higher that the infection rate of a classical SIRS model; this confirm the fact that the absence of skeptical individuals increases the spread of the disease. Since the two slow SIRS models have a basic reproduction number $\mathcal{R}_0^{s,0}$ and $\mathcal{R}_0^{s,1}$, respectively, which determines when the system converges to the Disease Free Equilibrium or there exist only a unique Endemic Equilibrium, as for a simple SIRS model. Assuming that the first one is smaller than $1$ and the second one bigger than $1$ means that the disease persists (thus becoming endemic), rather than dying out, due to the presence of misinformed individuals. Lastly, for the third branch of the critical manifold, related to the Misinformed-Free equilibrium, the obtained slow SIRS model has an infection rate smaller than the infection rate of the simple SIRS model, meaning that the absence of misinformed individuals and the presence of skeptical one decrease the disease infection rate. Moreover, the infection rate in this case directly depends on the number of the infected individuals. Diametrically opposite conclusions, compared to the second case, can be obtained regarding the basic reproduction numbers of the second and the third slow SIRs models respectively, i.e. the presence of enough skeptical individuals to the disease-related misinformation might lead to the extinction of the disease.}

\correz{Our analytical results are illustrated by the} numerical simulations, which showcase various possible asymptotic behaviours of the system, with orbits approaching different equilibria as $t\to +\infty$, with remarkable transient behaviours. \correz{Moreover}, we observe multiple instances of delayed loss of stability, for systems which are not entirely covered by theoretical results on the matter.


The analysis we carried out in this work gave rise to various questions. We elaborate on three of them here, considering their potential for future research projects.

Firstly, is it possible to obtain results similar to the ones contained in this paper, however for an unknown function $\bar{b_2}$, assumed only to be decreasing in $I$? This function would still be ``frozen'' in the fast flow, \correz{since $I$ is one of the slow variables of system \eqref{mod_3},} so the additional difficulties would only impact the slow flow. However, for SIR models, there are already known results in literature with minimal assumptions on the incidence, recall \cite{lahrouz2012complete}.

\correz{Secondly, would our model exhibit interesting new dynamics giving us new biological insight about the epidemic and the infodemic, if we complicated the information layer of the dynamics?} For example, if we allowed misinformed individuals to become skeptical, and viceversa? Moreover, we speculate that a different choice for the function $\correz{\Bar{\beta}(M,Z)}$ might result in MMOs for a system of ODEs very similar to \eqref{mod_3}. However, changing $\correz{\Bar{\beta}(M,Z)}$ would require to re-do most of the analysis we carried out, at the very least on the slow systems. Nevertheless, we believe that this is an extremely promising outlook for a future work on similar compartmental models.

Thirdly, on a more theoretical note, we pose the following challenging question: is it possible to generalize the entry-exit function to systems with non-monotone slow dynamics? A minimal model to test this could be a 2D slow-fast system, with the slow variable exhibiting (damped) oscillations on the critical manifold. Consider, for example, $\Dot{x}=\sin x$, or  $\Dot{x}=\sin x/(x^2+1)$, on the corresponding critical manifold. However, such a question is far from the scope of this paper, and we leave it open for a more theoretical work in the future.\\

\textbf{Acknowledgments} 
This work is the output of the collaboration started during the Workshop MSE: Modellistica Socio-Epidemiologica (Socio-epidemiological modelling), held in Naples in May 2023. 

I.M.B. thanks Piergiorgio Castioni and Irene Ferri for the useful discussions on the problem at the  Winter Workshop on Complex Systems 2022 (WWCS2022). 

I.M.B. has been supported by Fondazione di Sardegna, project 2022/2023: CUP-J83C21000060007, by ``Finanziamento GNCS Giovani Ricercatori 2021-2022”, by INdAM – GNCS Project 2023: CUP-E53C22001930001 and by the project TIGRECO funded by the MUR Progetti di Ricerca di Rilevante Interesse Nazionale (PRIN) Bando 2022, Grant 20227TRY8H, CUP-J53D23003630001. M.S. and S.S. were supported by the Italian Ministry for University and Research (MUR) through the PRIN 2020 project ``Integrated Mathematical Approaches to Socio-Epidemiological Dynamics'' (No. 2020JLWP23). S.S. is member of the ``Gruppo Nazionale per l'Analisi Matematica e le sue Applicazioni" (GNAMPA) of the ``Istituto Nazionale di Alta Matematica" (INdAM).

\appendix \section{The equilibrium analysis of the complete system}\label{app_eq}
One of the strengths of GSPT is the ability to iteratively reduce the dimensionality of the system under study. In this paper, we started with a 6D system, however we always considered at most 3 of those dimensions at a time, with a great advantage in terms of computational burden. Nevertheless, it might be instructive to showcase what one can deduce on the local stability of the equilibria of the full 6D model.

We now list the equilibria of model \eqref{mod_3}, which are exactly the equilibria of the slow systems on the corresponding branches of the critical manifold. For ease of notation, we maintain the order of the variables of system \eqref{mod_3} and indicate with $*$ the position where the value is different from 0 and 1. 

We hence obtain the analytical expression of seven equilibrium points, namely 
\begin{itemize}
\item[(i)] Misinformation-disease free equilibrium (MDFE) $E_0=(1,0,0,1,0,0)$, which is always feasible. This corresponds to the Disease Free Equilibrium on the first branch of the critical manifold $\mathcal{C}_0^0$ \eqref{eq:crit_manif}.
\item[(ii)] Misinformation free equilibrium (MFE) $E_1=(1,0,0,*,*,*)$, with 
$$S^{*,1} = \dfrac{\gamma+\correz{\mu_2} }{\beta}, \quad  I^{*,1} = \dfrac{(\beta - \gamma - \correz{\mu_2})(\eta + \correz{\mu_2})}{\beta (\gamma + \eta + \correz{\mu_2})}, \qquad R^{*,1} = \dfrac{\gamma I^{*,1}}{\eta + \correz{\mu_2}},$$ 

which is feasible if $\beta > \gamma + \correz{\mu_2}$, meaning if $\mathcal{R}_0^{s,0}>1$ (recall \eqref{eq:rzerozero}). This corresponds to the Endemic Equilibrium on the first branch of the critical manifold $\mathcal{C}_0^0$ \eqref{eq:crit_manif}.
\item[(iii)] Disease-skeptical individuals free equilibrium (DSIFE) $E_2=(*,*,0,1,0,0)$, with 
$$ U^{*,2} = \dfrac{\correz{\mu_1}}{b_1} ,\quad M^{*,2} = \dfrac{b_1- \correz{\mu_1}}{b_1}, $$
which is feasible if $ \correz{\mu_1}<b_1$. This corresponds to the Disease Free Equilibrium on the second branch of the critical manifold $\mathcal{C}_0^1$ \eqref{eq:crit_manif}.

\item[(iv)] Skeptical individuals free equilibrium (SIFE) $E_3=(*,*,0,*,*,*)$, with 


$$ U^{*,3} = \dfrac{\correz{\mu_1}}{b_1}, \quad M^{*,3} = \dfrac{b_1- \correz{\mu_1}}{b_1}, \quad S^{*,3}= \dfrac{\gamma + \correz{\mu_2}}{\bar{\beta}}, \qquad I^{*,3} = \dfrac{(\bar{\beta}-\gamma - \correz{\mu_2} )(\eta + \correz{\mu_2})}{\bar{\beta} (\gamma + \eta + \correz{\mu_2})}, \qquad R^{*,3} = \dfrac{\gamma I^{*,3}}{\eta + \correz{\mu_2}}$$ with  $$ \bar{\beta} = \beta\dfrac{2b_1-\correz{\mu_1}}{b_1},$$ 
which is feasible if $ \correz{\mu_1} < b_1$ and $\beta > \dfrac{b_1(\gamma + \correz{\mu_2})}{2 b_1-\correz{\mu_1}}$, meaning if $\mathcal{R}_0^{s,1}>1$ (recall \eqref{eq:rzerouno}). This corresponds to the Endemic Equilibrium on the second branch of the critical manifold $\mathcal{C}_0^1$ \eqref{eq:crit_manif}.

\item[(v)] Disease-misinformed individuals free equilibrium (DMIFE) $E_4=(*,0,*,1,0,0)$, with 

$$ U^{*,4} = \dfrac{\correz{\mu_1}}{b_2}, \quad Z^{*,4} = \dfrac{b_2- \correz{\mu_1}}{b_2}, $$
which is feasible if $ \correz{\mu_1} < b_2$. This corresponds to the Disease Free Equilibrium on the third branch of the critical manifold $\mathcal{C}_0^2$ \eqref{eq:crit_manif}.

\item[(vi)] Misinformed individual free equilibrium (MIFE)
$E_5=(*,0,*,*,*,*)$, with 
$$ U^{*,5} = \dfrac{\correz{\mu_1} (1-KI^{*,5})}{b_2} ,\quad Z^{*,5} = \dfrac{b_2- \correz{\mu_1}(1-KI^{*,5})}{b_2} ,\quad 
S^{*,5} = \dfrac{(\gamma + \correz{\mu_2}) ( 2b_2 - \correz{\mu_1}(1-KI^{*,5})}{\beta b_2},$$
$$I^{*,5} = \dfrac{(\gamma+\correz{\mu_2})(\beta b_2 - (\gamma+\correz{\mu_2})(2b_2-\correz{\mu_1}))}{\beta b_2 (\eta + \correz{\mu_2} + \gamma) +\correz{\mu_1} K ( \eta+\correz{\mu_2})(\gamma+\correz{\mu_2})}, \quad R^{*,5} = \dfrac{\gamma I^{*,5}}{\eta + \correz{\mu_2}}, $$

which is feasible if $\mathcal{R}_0^{s,2}>1$ (recall \eqref{eq:rzerodue}). This corresponds to the Endemic Equilibrium on the third branch of the critical manifold $\mathcal{C}_0^2$ \eqref{eq:crit_manif}.

\item[(vii)] Coexistence equilibrium $E_*=(*,*,*,*,*,*)$, with 
$$ U^* = \dfrac{\correz{\mu_1}}{b_1}, \quad M^* =\dfrac{(\gamma + \correz{\mu_2})(1+Z^*)- \beta S^*}{\beta S^*} , \quad  Z^* = \dfrac{  b_1(\gamma+ \correz{\mu_2})+ \beta S^* }{b_1  (\gamma + \correz{\mu_2} - \beta S^*)},   
\quad S^* = \dfrac{ (\eta +\correz{\mu_2})- (\eta +\correz{\mu_2}+ \gamma) I^* }{(\eta +\correz{\mu_2})}, \quad $$ 
$$I^* = \dfrac{b_1-b_2}{K b_1},\quad R = \dfrac{\gamma I^*}{\eta + \correz{\mu_2}} , $$
which is feasible if $b_1>b_2$, $(\eta+\correz{\mu_2})(1-I^*)>\gamma I^*$, $\beta S^* < \gamma +\correz{\mu_2}$ and $M^*>0$.

This equilibrium does not lie on any branch of the critical manifold, and as such it was not identified through our analysis in the main body of the paper; however it does lie on the line of equilibria described in Section \ref{sec:threshold}. 
\end{itemize}
The feasibility conditions were analyzed in the main body of this work; in what follows, we will focus our attention on the local stability of the equilibrium points, assuming that they are indeed feasible. For ease of notation, every time $U^*$, $M^*$, $Z^*$, $S^*$, $I^*$ and $R^*$ are mentioned, we refer to the expressions they assume at the corresponding equilibrium. 

\begin{itemize}
\item[(i)] The misinformation-disease free equilibrium, $E_0$, it is stable iff $\correz{\mu_1} > \max{ \left\lbrace b_1,b_2 \right\rbrace}$ holds. In fact the eigenvalues are $\lambda_1=-\correz{\mu_1}$, $\lambda_2=-\varepsilon \correz{\mu_2}$, $\lambda_3= b_1-\correz{\mu_1}$, $\lambda_4= b_2-\correz{\mu_1}$, $\lambda_5= -\varepsilon (\eta + \correz{\mu_2})$, $\lambda_6= -\varepsilon (\beta + \gamma+ \correz{\mu_2})$. 

\item[(ii)] The eigenvalues corresponding to the misinformation free equilibrium $E_1$  are $\lambda_1=-\correz{\mu_1}$, $\lambda_2=-\varepsilon \correz{\mu_2}$,  $\lambda_3= b_1-\correz{\mu_1}$,  $\lambda_4= - \correz{\mu_1} + \dfrac{b_2}{1-KI^*}$, while $\lambda_5$ and  $\lambda_6$ are the roots of the second degree polynomial in $\lambda$
$$ \lambda^2 +\lambda (\beta p_1+1)(\correz{\mu_2}+\eta) \varepsilon+p_1 \beta (\correz{\mu_2}+\eta)(\correz{\mu_2}+\gamma+\eta) \varepsilon^2=0. $$
Since the coefficients of the second degree polynomial introduced above are all positive, notice that if the roots are complex we have that the real part of both $\lambda_5$ and  $\lambda_6$ are negative, while if the roots are real for the Descartes' rule of signs they are negative. Therefor the equilibrium $E_1$ it is stable iff $\correz{\mu_1} > \max \left\lbrace b_1, \dfrac{b_2}{1-KI^*}  \right\rbrace$ (recall that the condition $ \correz{\mu_1} < b_1$ is needed for the feasibility of $E_3$). 

\item[(iii)] The disease-skeptical individuals free equilibrium $E_2$ it is stable iff $b_1-b_2 >0$ and $\beta < \dfrac{b_1(\gamma + \correz{\mu_2})}{2 b_1-\correz{\mu_1}}$. The corresponding eigenvalues are  $ \lambda_1 =-\varepsilon \correz{\mu_2}$, $ \lambda_2=-\varepsilon(\eta +\correz{\mu_2})$, $\lambda_3= - \dfrac{\correz{\mu_1}  (b_1-b_2)}{b_1}$, $\lambda_4= -\correz{\mu_1}$, $\lambda_5= \correz{\mu_1} - b_1$ and $\lambda_6= -\varepsilon \dfrac{b_1( \gamma+ \correz{\mu_2})-\beta(2b_1 -\correz{\mu_1}) }{b_1}$. 

\item[(iv)] The eigenvalues that corresponds to the skeptical individuals free equilibrium $E_3$ are $\lambda_1 = -\varepsilon\correz{\mu_2}$, $\lambda_2=\correz{\mu_1} - b_1 $, $\lambda_3 = -\correz{\mu_1}$, 
$$\lambda_4 = -\dfrac{\correz{\mu_1}\left[ b_1^2 (\eta +\correz{\mu_2})(\gamma+\correz{\mu_2}) - \beta(2 b_1 - \correz{\mu_1}) p_2 \right]}{b_1 \left[ p_2+ \beta \gamma (2 b_1- \correz{\mu_1})+ b_1(\gamma +\correz{\mu_2}^2)\right]}$$
where $p_2=b_2(\eta +\gamma +\correz{\mu_2}) - \gamma b_1$, while $\lambda_5$ and $\lambda_6$ are the roots of the following second degree polynomial in $\lambda$:
$$\lambda^2+ \dfrac{\varepsilon(\eta + \correz{\mu_2}) \left[ \eta b_1+(2 b_1-\correz{\mu_1}) \beta \right]}{b_1(\gamma + \eta + \correz{\mu_2})} \lambda+ \frac{\varepsilon^2(\eta + \correz{\mu_2}) \left[ \beta(2 b_1-\correz{\mu_1})-b_1(\correz{\mu_2} + \gamma) \right] }{b_1} =0. $$
Notice that for the Routh-Hurwitz stability criterion for a system of second order, since all the coefficients of $p(\lambda)$ are all positive its roots have negative real parts. Thus for the stability of $E_5$, 
$$ \left[ b_1^2 (\eta +\correz{\mu_2})(\gamma+\correz{\mu_2}) - \beta(2 b_1 - \correz{\mu_1}) p_2 \right]>0$$ must hold.

\item[(v)] The disease-misinformed individuals free equilibrium $E_4$ it is stable iff $b_1 <b_2$ and $\beta < \dfrac{(\gamma +\correz{\mu_2})( 2 b_2- \correz{\mu_1})}{b_2}$. The corresponding eigenvalues are  $ \lambda_1 =-\varepsilon \correz{\mu_2}$, $ \lambda_2=-\varepsilon(\eta +\correz{\mu_2})$, $\lambda_3= - \dfrac{\correz{\mu_1} (b_2-b_1)}{b_2}$, $\lambda_4= -\correz{\mu_1}$, $\lambda_5= -b_2+ \correz{\mu_1}$ and $\lambda_6= -\varepsilon \dfrac{(2b_2 -\correz{\mu_1})(\gamma +\correz{\mu_2})-\beta b_2}{2b_2 -\correz{\mu_1}}$. 

\item[(vi)] The eigenvalues of the misinformed individual free equilibrium $E_5$ are $\lambda_1 = -\varepsilon\correz{\mu_2}$, $\lambda_2=-\correz{\mu_1} + b_1 U^*$, $\lambda_3 = -\correz{\mu_1}$, while the remaining three eigenvalues are the roots of a third degree polynomial, to involved to be reported here. Numerical evidence, and our analysis in the main body of this work, show that there exist sets of parameters for which $E_5$ it is stable.
 

\item[(vii)] Computing the eigenvalues of the coexistence equilibrium $E_*$ we obtain $\lambda_1 = -\varepsilon\correz{\mu_2}$, $\lambda_2 = -\correz{\mu_1}$ while the remaining four ones are the roots of a fourth degree polynomial in $\lambda$, which we chose not to include here, since it doesn't provide useful information. We refer to Section \ref{sec:numerics} for a choice of parameters for which the coexistence equilibrium is asymptotically (hence, locally) stable.

 \end{itemize}

\section{Global Stability}\label{app_gs}

\globalMSFE*

\begin{proof} 
Summing the equations \eqref{info_eqs}, and recalling that $U \leq 1$, we obtain
    \begin{align*}
        \dfrac{\text{d}}{\text{d}t}(M+Z) =& \left(b_1 M + \dfrac{b_2}{1-KI}Z \right)U - \correz{\mu_1} (M+Z)\\[2pt]
        \leq & \left(b_1 M + \dfrac{b_2}{1-KI}Z \right) - \correz{\mu_1} (M+Z) \\
        \leq & \left( \max\left\lbrace b_1,  \dfrac{b_2}{1-KI}\right\rbrace - \correz{\mu_1} \right) (M+Z)\\
        = & \correz{\mu_1} \left(  \R0^{\text{f}} -1\right) (M+Z).
    \end{align*}
    If $\R0^{\text{f}}<1$, then it follows that
    \begin{equation*}
        \lim_{t \to + \infty} (M(t) + Z(t)) = 0 \text{ exponentially},
    \end{equation*}
and on the set $\{M = Z = 0\}$, $U \to 1$. We can conclude that the equilibrium $\tilde{E}_0$ is globally exponentially stable if $\R0^{\text{f}}<1$.
\end{proof}

\globalSFE*

\begin{proof}
    It is easy to see that, when $\dfrac{b_2}{\correz{\mu_1} (1-KI)} < 1$, $Z \to 0$ exponentially, with the same argument used in  Proposition \ref{global_MSFE}. Hence, we assume $Z=0$, and we prove global asymptotic stability of the SFE through the use of the classic Goh-Lotka-Volterra Lyapunov function \cite{cangiotti2023survey}:
    $$
    V(U,M)\coloneqq U-U^* \ln U + M -M^* \ln M,
    $$
    where we introduce $U^*$ and $M^*$ as the corresponding equilibrium values, for ease of notation. Then,
    \begin{align*}
        \dfrac{\text{d}}{\text{d}t} V(U,M) &= \dfrac{\text{d}}{\text{d}t} U -\dfrac{U^*}{U} \left( \dfrac{\text{d}}{\text{d}t}U \right)+ \dfrac{\text{d}}{\text{d}t} M -\dfrac{M^*}{M} \left( \dfrac{\text{d}}{\text{d}t}M \right)\\
        &=\correz{\mu_1} -b_1 UM-\correz{\mu_1} U -U^* \left(\dfrac{\correz{\mu_1}}{U} - b_1 M -\correz{\mu_1}\right)+M(b_1 U-\correz{\mu_1})-M^*(b_1 U-\correz{\mu_1})\\
        &= (U-U^*)\left(\dfrac{\correz{\mu_1}}{U} - b_1 M -\correz{\mu_1}\right)+b_1(U-U^*)(M-M^*)\\
        &= (U-U^*)\left(\dfrac{\correz{\mu_1}}{U}-\correz{\mu_1} - b_1 M^*\right)\\
        &=(U-U^*)\left(\dfrac{\correz{\mu_1}}{U}-\correz{\mu_1} - b_1 (1-U^*)\right).
    \end{align*}
Using the fact that $\correz{\mu_1} = b_1 U^*$, we obtain
    \begin{align*}
        \dfrac{\text{d}}{\text{d}t} V(U,M) &= \dfrac{(U-U^*)}{U}(b_1 U^* - b_1 U U^* - b_1 U + b_1 U U^*)\\
        &= -\dfrac{b_1}{U}(U-U^*)^2<0,
    \end{align*}
    which concludes the proof.
\end{proof}

\globalMFE*
\begin{proof}
    It is easy to see that, when $\dfrac{b_1}{\correz{\mu_1}} <1$, $M \to 0$ exponentially, with the same argument used in  Proposition \ref{global_MSFE}. Hence, we assume $M=0$, and we prove global asymptotic stability of the SFE through the use of the classic Goh-Lotka-Volterra Lyapunov function \cite{cangiotti2023survey}:
    $$
    V(U,Z)\coloneqq U-U^* \ln U + Z -Z^* \ln Z,
    $$
    where we introduce $U^*$ and $Z^*$ as the corresponding equilibrium values, for ease of notation. The result follows from computations which are almost identical to the ones in Proposition \ref{global_SFE}; hence, we do not repeat them.
\end{proof}

\globalSFEb*

\begin{proof}
    To prove the global asymptotic stability of the SFE we use of the following Lyapunov function:
    \begin{equation*}
        V(U,M,Z) \coloneqq U-U^* \ln U + M - M^* \ln M + Z,
    \end{equation*}
    where $U^*$ and $M^*$ are the corresponding equilibrium values, as in proposition \ref{global_SFE}. Then
    \begin{align*}
        \dfrac{\text{d}}{\text{d}t} V(U,M) &= \dfrac{\text{d}}{\text{d}t} U -\dfrac{U^*}{U} \left( \dfrac{\text{d}}{\text{d}t}U \right)+ \dfrac{\text{d}}{\text{d}t} M -\dfrac{M^*}{M} \left( \dfrac{\text{d}}{\text{d}t}M \right) + \dfrac{\text{d}}{\text{d}t} Z \\ 
        & = (U-U^*) \left( \dfrac{\correz{\mu_1}}{U} - b_1 M - \dfrac{b_1}{1-KI} Z -\correz{\mu_1} \right) +(M-M^*) (b_1 U - \correz{\mu_1}) + Z \left(\dfrac{b_2}{1-KI} U -\correz{\mu_1}\right) \\
        & = (U-U^*) \left( \dfrac{\correz{\mu_1}}{U} -\correz{\mu_1}\right) + U^* \left( b_1 M + \dfrac{b_2}{1-KI}Z \right) - \correz{\mu_1}(M+Z) - M^* (b_1 U -\correz{\mu_1}).
    \end{align*}
Since $\dfrac{b_2}{1-KI}<b_1$ we obtain
\begin{align*}
        \dfrac{\text{d}}{\text{d}t} V(U,M) &\le (U-U^*) \left( \dfrac{\correz{\mu_1}}{U} -\correz{\mu_1}\right) + b_1 U^* \left( M + Z \right) - \correz{\mu_1}(M+Z) - M^* (b_1 U -\correz{\mu_1}),
\end{align*}
from which, using the fact that $b_1 U^* = \correz{\mu_1}$ and $M^* = 1- U^*$, we have
\begin{align*}
        \dfrac{\text{d}}{\text{d}t} V(U,M) &\le (U-U^*) \left( \dfrac{\correz{\mu_1}}{U} -\correz{\mu_1}\right) - M^* (b_1 U -\correz{\mu_1})\\
        & = \correz{\mu_1} - \correz{\mu_1} U - \dfrac{\correz{\mu_1} U^*}{U} + \correz{\mu_1} U^* - b_1 U + \correz{\mu_1} + b_1 U U^* - U^*\\
        & = \correz{\mu_1} \left(1 - U - \dfrac{U^*}{U} + U^* - \dfrac{U}{U^*} + 1 + U - U^* \right)\\
        & = \correz{\mu_1} \left(2 - \dfrac{U^*}{U}  -  - \dfrac{U}{U^*}   \right)\\
        & = - \correz{\mu_1} \dfrac{( U-U^*)^2}{U U^*} \leq 0,
\end{align*}
which concludes the proof.
\end{proof}

\globalMFEb*

\begin{proof}
     To prove the global asymptotic stability of the MFE we use of the following Lyapunov function:
    \begin{equation*}
        V(U,M,Z) \coloneqq U-U^* \ln U + M + Z - Z^* \ln Z,
    \end{equation*}
    where $U^*$ and $Z^*$ are the corresponding equilibrium values, as in Proposition \ref{global_SFE2}. The result follows
from computations which are almost identical to the ones in Proposition \ref{global_SFE2}, using the hypothesis $b_1 < \dfrac{b_2}{1-KI}$; hence, we do not repeat them.
\end{proof}

\globalEEpat*

\begin{proof}
     To prove the global asymptotic stability of the positive equilibrium $\tilde{E}$ we use of the following Lyapunov function:
    \begin{equation*}
        V(U,L) \coloneqq \dfrac{1}{2}(U-U^*)^2 + L - L^* \ln L,
    \end{equation*}
    where $U^*$ and $L^*$ are the corresponding equilibrium values of $\tilde{E}$. Then
     \begin{align*}
        \dfrac{\text{d}}{\text{d}t} V(U,L) &=  (U-U^*) \dfrac{\text{d}}{\text{d}t} U + \dfrac{\text{d}}{\text{d}t} L -\dfrac{L^*}{L} \left( \dfrac{\text{d}}{\text{d}t}L\right) \\ &=
        (U-U^*) (\correz{\mu_1} - b UL - \correz{\mu_1} U) + \dfrac{L-L^*}{L} (b U L - \correz{\mu_1} L)\\&=
        (U-U^*) (\correz{\mu_1} - b UL - \correz{\mu_1} U) + (L-L^*)(b U  - \correz{\mu_1} ),
        \end{align*}
        from which, using the fact that $\correz{\mu_1} = (bL^* + \correz{\mu_1}) U^*$ and $b U^* = \correz{\mu_1}$, we have
        \begin{align*}
        \dfrac{\text{d}}{\text{d}t} V(U,L) &=  (U-U^*) (b L^* U^* + \correz{\mu_1} U^* - b UL - \correz{\mu_1} U) + (L-L^*) (b U - bU^*) \\ 
        &=  b(U-U^*) (L^* U^* - UL) - \correz{\mu_1}(U- U^*)^2 - \correz{\mu_1} (L-L^*) (U^*- U)\\
        & \leq (U-U^*) (b(U^*L^* - UL) + \correz{\mu_1} (L-L^*) ) \\
        & = (U-U^*) (b U^* L^* - b UL + b U^* L - b U^* L^*)\\
        &= - b L (U-U^*)^2 \leq 0,
        \end{align*}
        which concludes the proof.
\end{proof}

{\footnotesize
	\bibliographystyle{unsrt}
	\bibliography{biblio}

\begin{thebibliography}{10}

\bibitem{DurMulSal}
F.~Durazzi, M.~Müller, and M.~Salath\'e et~al.
\newblock Clusters of science and health related twitter users become more
  isolated during the {COVID-19} pandemic.
\newblock {\em Sci Rep}, 11(1):19655, 2021.

\bibitem{PhysRevE.102.042314}
Y.~Ye, Q.~Zhang, Z.~Ruan, Z.~Cao, Q.~Xuan, and D.~D. Zeng.
\newblock Effect of heterogeneous risk perception on information diffusion,
  behavior change, and disease transmission.
\newblock {\em Phys. Rev. E}, 102:042314, Oct 2020.

\bibitem{sontag2022misinformation}
A.~Sontag, T.~Rogers, and C.~A. Yates.
\newblock Misinformation can prevent the suppression of epidemics.
\newblock {\em Journal of the Royal Society Interface}, 19(188):20210668, 2022.

\bibitem{KerMcK}
W.~O. Kermack and A.~G. McKendrick.
\newblock A contribution to the mathematical theory of epidemics.
\newblock {\em Proc. R. Soc. Lond.}, 115:A115700–721, 1927.

\bibitem{juher2018tuning}
D.~Juher and J.~Salda{\~n}a.
\newblock Tuning the overlap and the cross-layer correlations in two-layer
  networks: {A}pplication to a susceptible-infectious-recovered model with
  awareness dissemination.
\newblock {\em Physical Review E}, 97(3):032303, 2018.

\bibitem{juher2023saddle}
D.~Juher, D.~Rojas, and J.~Salda{\~n}a.
\newblock Saddle--node bifurcation of limit cycles in an epidemic model with
  two levels of awareness.
\newblock {\em Physica D: Nonlinear Phenomena}, 448:133714, 2023.

\bibitem{just2018oscillations}
W.~Just, J.~Saldana, and Y.~Xin.
\newblock Oscillations in epidemic models with spread of awareness.
\newblock {\em Journal of Mathematical Biology}, 76:1027--1057, 2018.

\bibitem{manrubia2022individual}
S.~Manrubia and D.~H. Zanette.
\newblock Individual risk-aversion responses tune epidemics to critical
  transmissibility {(R= 1)}.
\newblock {\em Royal Society Open Science}, 9(4), 2022.

\bibitem{ye2021game}
M.~Ye, L.~Zino, A.~Rizzo, and M.~Cao.
\newblock Game-theoretic modeling of collective decision making during
  epidemics.
\newblock {\em Physical Review E}, 104(2):024314, 2021.

\bibitem{BulMonPed}
I.~M. Bulai, F.~Montefusco, and M.~G. Pedersen.
\newblock Stability analysis of a model of epidemic dynamics with nonlinear
  feedback producing recurrent infection waves.
\newblock {\em Applied Mathematics Letters}, 136:108455, 2023.

\bibitem{MonProBul}
F.~Montefusco, A.~Procopio, I.~M. Bulai, F.~Amato, M.~G. Pedersen, and
  C.~Cosentino.
\newblock {Interacting With COVID-19: How Population Behavior, Feedback and
  Memory Shaped Recurrent Waves of the Epidemic}.
\newblock {\em IEEE Control Systems Letters}, 7:583--588, 2023.

\bibitem{JohPel}
M.~D. Johnston and B.~Pell.
\newblock A dynamical framework for modeling fear of infection and frustration
  with social distancing in {COVID-19} spread.
\newblock {\em Mathematical Biosciences and Engineering}, 17(6):7892--7915,
  2020.

\bibitem{ScaJacKat}
D.~Scales, J.~Gorman, and K.~H. Jamieson.
\newblock {The Covid-19 Infodemic — Applying the Epidemiologic Model to
  Counter Misinformation}.
\newblock {\em New England Journal of Medicine}, 385(8):678--681, 2021.

\bibitem{GalValCas}
R.~Gallotti, F.~Valle, N.~Castaldo, P.~Sacco, and M.~De~Domenico.
\newblock Assessing the risks of 'infodemics' in response to {COVID-19}
  epidemics.
\newblock {\em New England Journal of Medicine}, 4(12):1285--1293, 2020.

\bibitem{WANG20151}
Z.~Wang, M.~A. Andrews, Z.-X. Wu, L.~Wang, and C.~T. Bauch.
\newblock Coupled disease–behavior dynamics on complex networks: {A} review.
\newblock {\em Physics of Life Reviews}, 15:1--29, 2015.

\bibitem{buonomo2023oscillations}
B.~Buonomo and A.~Giacobbe.
\newblock Oscillations in {SIR} behavioural epidemic models: The interplay
  between behaviour and overexposure to infection.
\newblock {\em Chaos, Solitons \& Fractals}, 174:113782, 2023.

\bibitem{fenichel1979geometric}
N.~Fenichel.
\newblock Geometric singular perturbation theory for ordinary differential
  equations.
\newblock {\em J Differ Equ}, 31(1):53--98, 1979.

\bibitem{hek2010geometric}
G.~Hek.
\newblock Geometric singular perturbation theory in biological practice.
\newblock {\em J Math Biol}, 60(3):347--386, 2010.

\bibitem{jones1995geometric}
C.~K. R.~T. Jones.
\newblock Geometric singular perturbation theory.
\newblock {\em Dynamical systems}, pages 44--118, 1995.

\bibitem{kuehn2015multiple}
C.~Kuehn.
\newblock {\em Multiple time scale dynamics}, volume 191.
\newblock Springer, 2015.

\bibitem{jardon2021geometric1}
H.~Jard{\'o}n-Kojakhmetov, C.~Kuehn, A.~Pugliese, and M.~Sensi.
\newblock {A geometric analysis of the SIR, SIRS and SIRWS epidemiological
  models}.
\newblock {\em Nonlinear Analysis: Real World Applications}, 58:103220, 2021.

\bibitem{bertram2017multi}
R.~Bertram and J.~E. Rubin.
\newblock Multi-timescale systems and fast-slow analysis.
\newblock {\em Math Biosci}, 287:105--121, 2017.

\bibitem{wechselberger2020geometric}
M.~Wechselberger.
\newblock {\em Geometric singular perturbation theory beyond the standard
  form}.
\newblock Springer, 2020.

\bibitem{brauer2019singular}
F.~Brauer.
\newblock A singular perturbation approach to epidemics of vector-transmitted
  diseases.
\newblock {\em Infectious Disease Modelling}, 4:115--123, 2019.

\bibitem{heesterbeek1993saturating}
J.~A.~P. Heesterbeek and J.~A.~J. Metz.
\newblock The saturating contact rate in marriage-and epidemic models.
\newblock {\em Journal of Mathematical Biology}, 31:529--539, 1993.

\bibitem{jardon2021geometric2}
H.~Jard{\'o}n-Kojakhmetov, C.~Kuehn, A.~Pugliese, and M.~Sensi.
\newblock {A geometric analysis of the SIRS epidemiological model on a
  homogeneous network}.
\newblock {\em Journal of mathematical biology}, 83(4):1--38, 2021.

\bibitem{rocha2016understanding}
F.~Rocha, L.~Mateus, U.~Skwara, M.~Aguiar, and N.~Stollenwerk.
\newblock Understanding dengue fever dynamics: a study of seasonality in
  vector-borne disease models.
\newblock {\em International Journal of Computer Mathematics},
  93(8):1405--1422, 2016.

\bibitem{wang2014dynamical}
X.~Wang, L.~Wei, and J.~Zhang.
\newblock Dynamical analysis and perturbation solution of an {SEIR} epidemic
  model.
\newblock {\em Applied Mathematics and Computation}, 232:479--486, 2014.

\bibitem{andreasen1993effect}
V.~Andreasen.
\newblock The effect of age-dependent host mortality on the dynamics of an
  endemic disease.
\newblock {\em Mathematical biosciences}, 114(1):29--58, 1993.

\bibitem{chaves2007loss}
S.~S. Chaves, P.~Gargiullo, J.~X. Zhang, R.~Civen, D.~Guris, L.~Mascola, and
  J.~F. Seward.
\newblock Loss of vaccine-induced immunity to varicella over time.
\newblock {\em New England Journal of Medicine}, 356(11):1121--1129, 2007.

\bibitem{woolthuis2017variation}
R.~G. Woolthuis, J.~Wallinga, and M.~van Boven.
\newblock Variation in loss of immunity shapes influenza epidemics and the
  impact of vaccination.
\newblock {\em BMC infectious diseases}, 17(1):1--11, 2017.

\bibitem{castillo2016perspectives}
C.~Castillo-Chavez, D.~Bichara, and B.~R. Morin.
\newblock Perspectives on the role of mobility, behavior, and time scales in
  the spread of diseases.
\newblock {\em Proceedings of the National Academy of Sciences},
  113(51):14582--14588, 2016.

\bibitem{dellamarca2023geometric}
R.~Della~Marca, A.~d'Onofrio, M.~Sensi, and S.~Sottile.
\newblock A geometric analysis of the impact of large but finite switching
  rates on vaccination evolutionary games.
\newblock {\em Nonlinear Analysis: Real World Applications}, 75:103986, 2024.

\bibitem{schecter2021geometric}
S.~Schecter.
\newblock Geometric singular perturbation theory analysis of an epidemic model
  with spontaneous human behavioral change.
\newblock {\em Journal of Mathematical Biology}, 82(6):1--26, 2021.

\bibitem{de2020discrete}
R.~Bravo de~la Parra and L.~Sanz.
\newblock A discrete model of competing species sharing a parasite.
\newblock {\em Discrete \& Continuous Dynamical Systems-B}, 25(6):2121, 2020.

\bibitem{bravo2021discrete}
R.~Bravo de~la Parra and L.~Sanz-Lorenzo.
\newblock Discrete epidemic models with two time scales.
\newblock {\em Advances in Difference Equations}, 2021(1):1--24, 2021.

\bibitem{Diekman1990}
O.~Diekmann, J.~A. Heesterbeek, and J.~A. Metz.
\newblock On the definition and the computation of the basic reproduction ratio
  r0 in models for infectious diseases in heterogeneous populations.
\newblock {\em J Math Biol}, 28(4):365--82, 1990.

\bibitem{cangiotti2023survey}
N.~Cangiotti, M.~Capolli, M.~Sensi, and S.~Sottile.
\newblock {A survey on Lyapunov functions for epidemic compartmental models}.
\newblock {\em Bollettino dell'Unione Matematica Italiana}, pages 1--17, 2023.

\bibitem{perko}
L.~Perko.
\newblock {\em Differential Equations and Dynamical Systems}.
\newblock Springer, New York, Second edition.

\bibitem{o2010lyapunov}
S.~M. O’Regan, T.~C. Kelly, A.~Korobeinikov, M.~J.~A. O’Callaghan, and
  A.~V. Pokrovskii.
\newblock {Lyapunov functions for SIR and SIRS epidemic models}.
\newblock {\em Applied mathematics letters}, 23(4):446--448, 2010.

\bibitem{lahrouz2012complete}
A.~Lahrouz, L.~Omari, D.~Kiouach, and A.~Belma{\^a}ti.
\newblock Complete global stability for an {SIRS} epidemic model with
  generalized non-linear incidence and vaccination.
\newblock {\em Applied Mathematics and Computation}, 218(11):6519--6525, 2012.

\bibitem{kaklamanos2023geometric}
P.~Kaklamanos, A.~Pugliese, M.~Sensi, and S.~Sottile.
\newblock A geometric analysis of the {SIRS} model with secondary infections.
\newblock {\em arXiv preprint arXiv:2304.03793}, 2023.

\bibitem{achterberg2023minimal}
M.~A. Achterberg and M.~Sensi.
\newblock A minimal model for adaptive {SIS} epidemics.
\newblock {\em Nonlinear Dynamics}, pages 1--14, 2023.

\bibitem{de2008smoothness}
P.~De~Maesschalck.
\newblock Smoothness of transition maps in singular perturbation problems with
  one fast variable.
\newblock {\em Journal of Differential Equations}, 244(6):1448--1466, 2008.

\bibitem{de2016entry}
P.~De~Maesschalck and S.~Schecter.
\newblock The entry--exit function and geometric singular perturbation theory.
\newblock {\em Journal of Differential Equations}, 260(8):6697--6715, 2016.

\bibitem{kaklamanos2022entry}
P.~Kaklamanos, C.~Kuehn, N.~Popovi{\'c}, and M.~Sensi.
\newblock {Entry--Exit Functions in Fast--Slow Systems with Intersecting
  Eigenvalues}.
\newblock {\em Journal of Dynamics and Differential Equations}, pages 1--18,
  2023.

\bibitem{liu2000exchange}
W.~Liu.
\newblock Exchange lemmas for singular perturbation problems with certain
  turning points.
\newblock {\em Journal of Differential Equations}, 167(1):134--180, 2000.

\bibitem{neishtadt1987persistence}
A.~I. Neishtadt.
\newblock {Persistence of stability loss for dynamical bifurcations I}.
\newblock {\em Differential Equations}, 23:1385--1391, 1987.

\bibitem{neishtadt1988persistence}
A.~I. Neishtadt.
\newblock {Persistence of stability loss for dynamical bifurcations II}.
\newblock {\em Differential Equations}, 24:171--176, 1988.

\bibitem{schecter2008exchange}
S.~Schecter.
\newblock {Exchange lemmas 2: General exchange lemma}.
\newblock {\em Journal of Differential Equations}, 245(2):411--441, 2008.

\bibitem{brons2008introduction}
M.~Br{\o}ns, T.~J. Kaper, and H.~G. Rotstein.
\newblock {Introduction to focus issue: Mixed mode oscillations: Experiment,
  computation, and analysis}.
\newblock {\em Chaos: An Interdisciplinary Journal of Nonlinear Science},
  18(1), 2008.

\end{thebibliography}
}

%

\end{document}